\algnewcommand{\LineComment}[1]{\Statex \hskip\ALG@thistlm {\color{gray}\texttt{// #1}}}
\title{Balanced independent sets and colorings of hypergraphs}
\date{}
\author{\lsstyle Abhishek~Dhawan}
\email{abhishek.dhawan@math.gatech.edu}
\address{\textls{\normalfont{}School of Mathematics, Georgia Institute of Technology, Atlanta, GA, USA}}
\newtheoremstyle{bfnote}%
{}{}%
{\slshape}{}%
{\bfseries}{\bfseries.}%
{ }%
{\thmname{#1}\thmnumber{ #2}\thmnote{ \ep{\normalfont{}#3}}}
\theoremstyle{bfnote}
\newtheorem{theo}[equation]{Theorem}
\newtheorem*{theo*}{Theorem}
\newtheorem{Lemma}[equation]{Lemma}
\newtheorem{claim}{Claim}[equation]
\newtheorem{fact}[equation]{Fact}
\newtheorem*{corl*}{Corollary}
\newcounter{ForClaims}[section]
\theoremstyle{definition}
\newtheorem{defn}[equation]{Definition}
\newtheorem*{defn*}{Definition}
\newtheorem*{exmp*}{Example}
\theoremstyle{remark}
\newtheorem*{ques*}{Question}
\newtheorem*{remk*}{Remark}
\newcommand*{\myproofname}{Proof}
\newenvironment{claimproof}[1][\myproofname]{\begin{proof}[#1]}{\end{proof}}
\newcommand{\0}{\emptyset}
\newcommand{\set}[1]{\{#1\}}
\newcommand{\N}{{\mathbb{N}}}
\newcommand{\R}{\mathbb{R}}
\renewcommand{\P}{\mathbb{P}}
\newcommand{\E}{\mathbb{E}}
\renewcommand{\epsilon}{\varepsilon}
\newcommand{\eps}{\epsilon}
\renewcommand{\phi}{\varphi}
\renewcommand{\theta}{\vartheta}
\renewcommand{\leq}{\leqslant}
\renewcommand{\geq}{\geqslant}
\newcommand{\defeq}{\coloneqq}
\newcommand{\bemph}[1]{{\normalfont#1}} 
\newcommand{\ep}[1]{\bemph{(}#1\bemph{)}} 
\newcommand{\emphdef}[1]{\textbf{\textit{{#1}}}}
\newcommand{\emphd}[1]{\emphdef{#1}}
\numberwithin{equation}{section}
\newcommand{\bbone}{\mathbbm{1}}
\titleformat{\subsection}[block]{\bfseries}{\thesubsection.}{1ex}{}
\titleformat{\subsubsection}[runin]{\itshape}{\bfseries\upshape\thesubsubsection.}{1ex}{}[.---]
\titleformat{\section}[block]{\scshape\filcenter}{\thesection.}{1ex}{}
\titlespacing*{\section}{0pt}{*3}{*1}
\titlespacing*{\subsection}{0pt}{*3}{*1}
\titlespacing*{\subsubsection}{0pt}{*1.5}{*0}
\setlist{topsep=3pt,itemsep=3pt}
\begin{document}

\vspace*{0pt}

\maketitle
\begin{abstract}
    A $k$-uniform hypergraph $H = (V, E)$ is $k$-partite if $V$ can be partitioned into $k$ sets $V_1, \ldots, V_k$ such that every edge in $E$ contains precisely one vertex from each $V_i$.
    We call such a graph $n$-balanced if $|V_i| = n$ for each $i$.
    An independent set $I$ in $H$ is balanced if $|I\cap V_i| = |I\cap V_j|$ for each $1 \leq i, j \leq k$, and a coloring is balanced if each color class induces a balanced independent set in $H$.
    In this paper, we provide a lower bound on the balanced independence number $\alpha_b(H)$ in terms of the average degree $D = |E|/n$, and an upper bound on the balanced chromatic number $\chi_b(H)$ in terms of the maximum degree $\Delta$.
    Our results recover those of recent work of Chakraborti for $k = 2$.
\end{abstract}

\noindent

\section{Introduction}\label{section:intro}

All hypergraphs considered are finite and undirected.
A hypergraph is a generalization of a graph in which an edge can contain more than two vertices.
If every edge contains exactly $k$ vertices, we say the hypergraph is $k$-uniform (graphs are $2$-uniform hypergraphs).
Hypergraphs find a wide array of applications in satisfiability problems, Steiner triple systems, and particle tracking in physics, to name a few.
Before we state our results, we make a few definitions.
For $q \in \N$, we let $[q] \defeq \set{1, \ldots, q}$.
For a set $X$ and $k \in \N$, we let $\binom{X}{k}$ be the collection of $k$-element subsets of $X$, and we let $2^X$ denote the power set of $X$.
Let $H = (V, E)$ be an undirected $k$-uniform hypergraph, i.e., $E \subseteq \binom{V}{k}$.
We say $H$ is $k$-partite if there is a partition $V_1\cup \cdots\cup V_k$ of $V$ such that each edge $e \in E$ satisfies $|e\cap V_i| = 1$ for each $i$.
Furthermore, we say such a hypergraph is $n$-balanced if $|V_i| = n$ for each $i$.
For each $v \in V$, we let $E_H(v)$ denote the edges containing $v$, $N_H(v)$ denote the set of vertices contained in the edges in $E_H(v)$ apart from $v$ itself, $\deg_H(v) \defeq |E_H(v)|$, and $\Delta(H) \defeq \max_{u\in V}\deg_H(u)$.
In this paper, we are interested in independent sets and colorings of hypergraphs.
An independent set is a set $I \subseteq V$ containing no edges, and a proper coloring is a partition of $V$ into independent sets.

When considering $k$-partite hypergraphs, $\bigcup_{i \in J}V_i$ is an independent set for any $J \subsetneq [k]$, and a trivial $2$-coloring exists.
Namely, color the vertices in $V_1$ blue, and color all others red.
For $k=2$, bipartite graphs have been extensively studied.
There are a number of graph-theoretic problems where the interest is solely on results for bipartite graphs.
As $k$-partite hypergraphs are a generalization of bipartite graphs, it is natural to consider extensions of such problems to this setting.
We will consider the so-called \textit{balanced independence number} and \textit{balanced chromatic number}.

\begin{defn}\label{def:independent}
    Let $H = (V_1 \cup \cdots \cup V_k, E)$ be a $k$-uniform $k$-partite hypergraph.
    An independent set $I$ in $H$ is \emphd{balanced} if $|I\cap V_i| = |I\cap V_j|$ for each $i, j \in [k]$.
    For the largest balanced independent set $I \subseteq V(H)$, we let $\alpha_b(H) \defeq |I|/k$ denote the \emphd{balanced independence number}.
\end{defn}

\begin{defn}\label{def:color}
    Let $H = (V_1 \cup \cdots \cup V_k, E)$ be an $n$-balanced $k$-uniform $k$-partite hypergraph.
    A proper $q$-coloring $\phi\,:\,V(H) \to [q]$ is \emphd{balanced} if the color classes $\phi^{-1}(i)$ are balanced independent sets for each $i \in [q]$.
    The \emphd{balanced chromatic number} (denoted $\chi_b(H)$) is the minimum number of colors required for a balanced coloring.
\end{defn}

Balanced independent sets for bipartite graphs have gained recent interest from both a theoretical standpoint \cite{axenovich2021bipartite, chakraborti2023extremal}, as well as an algorithmic one \cite{perkins2024hardness}.
They were first introduced in \cite{ash1983two}, where the author aimed to determine when a bipartite graph contains a Hamilton cycle.
There has been a lot of work regarding lower bounds for $\alpha_b(G)$ in terms of the average (or maximum) degree of $G$.
In \cite{favaron1993bipartite}, the authors provided a general bound for all graphs as well as for some specific classes including sparse graphs and forests. 
Axenovich, Sereni, Snyder and Weber determined the correct asymptotic order of $\alpha_b(G)$ showing the following result:

\begin{theo}[\cite{axenovich2021bipartite}]\label{theo:old}
    For all $\eps > 0$, the following holds for $\Delta \in \N$ sufficiently large. 
    \begin{enumerate}[label=\ep{\normalfont{}\arabic*}]
        \item\label{item:upper-bound-graph} For $n \geq \Delta$, there exists an $n$-balanced bipartite graph $G$ of maximum degree at most $\Delta$ such that
        $\alpha_b(G) \leq (2+\eps)n\log\Delta/\Delta$.
        \item\label{item:lower-bound-graph} For $n \geq 5\Delta\log\Delta$, let $G$ be an $n$-balanced bipartite graph of maximum degree $\Delta$.
        Then, $\alpha_b(G) \geq n\log\Delta/(2\Delta)$.
    \end{enumerate}
\end{theo}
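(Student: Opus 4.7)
The theorem has two parts requiring two independent probabilistic arguments; my plan is to handle each separately.

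For part \ref{item:upper-bound-graph}, I would analyze the random bipartite graph $G \sim G(n,n,p)$ with $p \defeq \Delta/((1+\eta)n)$ for a small $\eta = \eta(\eps) > 0$. A Chernoff bound gives $\Delta(G) \leq \Delta$ with high probability. Setting $t \defeq \lceil (2+\eps)n\log\Delta/\Delta\rceil$, the probability that a fixed pair $(A,B)$ with $A \subseteq V_1$, $B \subseteq V_2$, $|A|=|B|=t$ spans no edge of $G$ is $(1-p)^{t^2}$, so a union bound over the $\binom{n}{t}^2 \leq (en/t)^{2t}$ such pairs yields
\[
    \P\bigl[G \text{ has a balanced independent set of size } 2t\bigr] \leq \exp\bigl(2t\log(en/t) - pt^2\bigr).
\]
Substituting $pt = (2+\eps)\log\Delta/(1+\eta)$ and $\log(en/t) = (1+o(1))\log\Delta$, the exponent is negative for $\Delta$ large and $\eta$ small relative to $\eps$, so this event has vanishing probability; intersecting with the max-degree event produces a witness graph.

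For part \ref{item:lower-bound-graph}, I would choose $I_1 \subseteq V_1$ uniformly at random of size $t \defeq \lceil n\log\Delta/(2\Delta)\rceil$ and show that $\E[|V_2 \setminus N_G(I_1)|] \geq t$. By averaging, some deterministic $I_1$ leaves at least $t$ vertices of $V_2$ untouched, and any $t$-subset $I_2$ of this residual set completes a balanced independent set of size $2t$. To establish the expectation bound, for each $v \in V_2$ I would write
\[
    \P[v \notin N_G(I_1)] = \binom{n-t}{\deg(v)}\Big/\binom{n}{\deg(v)} = \prod_{i=0}^{\deg(v)-1}\left(1 - \frac{t}{n-i}\right),
\]
and Taylor-expand $\log(1-x)$ term by term. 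The leading contribution is $-\deg(v) \cdot t/n$, which equals $-\log\Delta/2$ when $\deg(v)=\Delta$, while the hypothesis $n \geq 5\Delta\log\Delta$ keeps every higher-order correction bounded. This yields $\P[v \notin N_G(I_1)] \geq c/\sqrt{\Delta}$ for some absolute constant $c > 0$, hence $\E[|V_2 \setminus N_G(I_1)|] \geq cn/\sqrt{\Delta}$, which exceeds $t = n\log\Delta/(2\Delta)$ once $2c\sqrt{\Delta} \geq \log\Delta$.

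The main obstacle will be the sharp asymptotics in part \ref{item:lower-bound-graph}: the crucial factor $1/\sqrt{\Delta}$ only emerges if the constant in the leading exponent is tracked exactly, since coarser estimates such as $(1-x)^\Delta \geq \exp(-\Delta x/(1-x))$ with $x = t/(n-\Delta)$ deliver only $1/\Delta$, which is already smaller than $t/n = \log\Delta/(2\Delta)$ and would destroy the argument. The hypothesis $n \geq 5\Delta\log\Delta$ is calibrated precisely so that the correction terms $t^2\Delta/n^2$ and $t\Delta^2/n^2$ in the Taylor expansion are both $O(1)$, preserving the desired $\sqrt{\Delta}$ factor.
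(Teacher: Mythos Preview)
This theorem is not proved in the paper; it is quoted from \cite{axenovich2021bipartite} as background. The natural point of comparison is the paper's own Theorem~\ref{theo:independent}, whose $k=2$ case recovers part~\ref{item:upper-bound-graph} and sharpens part~\ref{item:lower-bound-graph} to the Chakraborti constant.

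For part~\ref{item:upper-bound-graph} your outline matches the paper's argument for~\ref{item:upper-bound} (random bipartite model, union bound over candidate balanced sets, degree control). One loose end: you assert that a Chernoff bound gives $\Delta(G)\le\Delta$ with high probability, but the union is over $2n$ vertices against a per-vertex tail $\exp(-\Theta(\Delta))$, so the resulting bound $2n\exp(-\Theta(\Delta))$ is useless once $n\gg e^{\Theta(\Delta)}$; since the statement must hold for \emph{every} $n\ge\Delta$, this step fails in that regime. The paper handles this by sampling on $N=n/(1-\gamma)$ vertices per side, showing (Lemma~\ref{lemma:small_degree}) that with high probability at most $\gamma N$ vertices per side have degree exceeding $\Delta$, and then deleting those---an independent set in the trimmed graph remains independent in the original, and the trimmed graph is $n$-balanced with maximum degree at most $\Delta$. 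Your version is salvageable with this trimming step (or by a disjoint-copies extension from a base size $n_0\approx\Delta$).

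For part~\ref{item:lower-bound-graph} your route is correct and genuinely different from the paper's treatment of~\ref{item:lower-bound}. You draw a uniformly random $t$-subset $I_1\subseteq V_1$ and lower-bound $\E\bigl[|V_2\setminus N_G(I_1)|\bigr]$ via a hypergeometric/Taylor computation, then average; the calibration of $n\ge 5\Delta\log\Delta$ against the correction terms $t\Delta^2/n^2$ and $t^2\Delta/n^2$ is exactly right. The paper instead includes each $v\in V_1$ independently with probability $p$, puts $v\in V_2$ into $I$ whenever no neighbour lies in $I$, bounds $\E[|I\cap V_2|]\ge n(1-p)^{D}$ by Jensen's inequality applied to the degree sequence, and finishes with the Reverse Markov inequality (Lemmas~\ref{lemma:V_k_exp}--\ref{lemma:V_k_conc_markov}). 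Your argument is more elementary and tied to the maximum-degree hypothesis, giving the constant $1/2$; the paper's argument runs on the \emph{average} degree and, through the Jensen step, reaches the sharper constant $1-\eps$ of Theorem~\ref{theo:Chakraborti}.
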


The result in Theorem~\ref{theo:old}\ref{item:lower-bound-graph} was recently improved by Chakraborti, where they improved the constant in the bound as well as extended the result to depend on the average degree rather than the maximum.
The relationship between $n$ and $D$ was not stated explicitly, but can be inferred from their arguments.

\begin{theo}[\cite{chakraborti2023extremal}]\label{theo:Chakraborti}
    For all $\eps > 0$, the following holds for $D \in \N$ sufficiently large and some constant $C\defeq C(\eps) > 0$.
    For $n \geq CD^2/\log D$, let $G$ be an $n$-balanced bipartite graph containing $Dn$ edges.
    Then, $\alpha_b(G) \geq (1-\epsilon)n\log D/D$.
\end{theo}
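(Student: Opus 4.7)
The plan is a one-sided probabilistic argument exploiting the bipartite structure of $G$. Set $p \defeq (1-\eps/2)\log D/D$, and include each $v \in V_1$ in a random set $S_1$ independently with probability $p$. Since $V_1$ is independent in $G$, so is $S_1$; moreover, writing $V_2' \defeq V_2 \setminus N_G(S_1)$, the union $S_1 \cup S_2$ is independent in $G$ for any $S_2 \subseteq V_2'$, because no vertex of $V_2'$ has a neighbor in $S_1$. Taking arbitrary $S_1' \subseteq S_1$ and $S_2 \subseteq V_2'$ of common size $\min(|S_1|, |V_2'|)$ produces a balanced independent set witnessing $\alpha_b(G) \geq \min(|S_1|, |V_2'|)$. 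It therefore suffices to show that, with positive probability, both $|S_1|$ and $|V_2'|$ exceed $(1-\eps) n \log D/D$.

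For $|S_1|$, a standard Chernoff bound on $|S_1| \sim \mathrm{Bin}(n, p)$ gives $|S_1| \geq (1-\eps/4)\, pn$ with probability $1 - e^{-\Omega(pn)}$; the hypothesis $n \geq CD^2/\log D$ ensures $pn \geq (1-\eps/2) CD$, so this failure probability is $o(1)$ for $D$ large. For $|V_2'|$, set $Y_v \defeq \bbone[v \notin N_G(S_1)]$ and $q_v \defeq \E Y_v = (1-p)^{\deg_G(v)}$. Jensen's inequality applied to the convex function $x \mapsto (1-p)^x$, combined with $\sum_{v \in V_2}\deg_G(v) = Dn$, yields
\[
    \E|V_2'| \;=\; \sum_{v \in V_2} q_v \;\geq\; n(1-p)^D \;\geq\; nD^{\eps/2 - 1},
\]
which exceeds the target $pn$ by a factor of order $D^{\eps/2}/\log D \to \infty$. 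Hence it is enough to prove $|V_2'| \geq \E|V_2'|/2$ with high probability via the second moment method.

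The main technical obstacle will be bounding $\mathrm{Var}(|V_2'|)$. Writing $c_{uv} \defeq |N_G(u) \cap N_G(v)|$ for $u, v \in V_2$, the identity $|N_G(u) \cup N_G(v)| = \deg_G(u) + \deg_G(v) - c_{uv}$ together with the elementary estimate $(1-p)^{-c_{uv}} - 1 \leq 2 p c_{uv}$ (valid in the regime $p c_{uv} = O(1)$) gives $\mathrm{Cov}(Y_u, Y_v) \leq 2 p c_{uv}\, q_u q_v$. The crux is to control
\[
    \sum_{u \neq v \in V_2} c_{uv}\, q_u q_v \;=\; \sum_{w \in V_1}\Bigl(\sum_{u \in N_G(w)} q_u\Bigr)^2 \;-\; \sum_{w \in V_1}\sum_{u \in N_G(w)} q_u^2.
\]
For graphs with very irregular degree sequences (a few high-degree vertices in $V_1$ can blow this up), I would first preprocess by deleting vertices of degree exceeding a threshold $T = D\,\mathrm{polylog}(D)$ from both $V_1$ and $V_2$; by Markov's inequality this discards only an $o(n)$ fraction from each side, preserving the $n$-balance and the average degree up to negligible error. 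A direct computation using $\sum_w f_w = \sum_u q_u \deg_G(u)$ together with the uniform bound $d(1-p)^d = O(D^{\eps/2})$, where $f_w \defeq \sum_{u \in N_G(w)} q_u$, then yields $\mathrm{Var}(|V_2'|)/(\E|V_2'|)^2 = O(\log^2 D / D) = o(1)$ under $n \geq CD^2/\log D$, and Chebyshev's inequality delivers $|V_2'| \geq \E|V_2'|/2 \gg pn$ with probability $1 - o(1)$. Combining with the bound on $|S_1|$ produces, with positive probability, an outcome in which $|S_1|, |V_2'| \geq (1-\eps) n\log D/D$, completing the proof.
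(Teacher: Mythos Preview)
Your setup---sample $S_1\subseteq V_1$ at rate $p=(1-\eps/2)\log D/D$, set $V_2'=V_2\setminus N_G(S_1)$, and output a balanced subset of $S_1\cup V_2'$---is exactly the $k=2$ specialization of the paper's procedure (Ind1)--(Ind3). The Chernoff step for $|S_1|$ is Lemma~\ref{lemma:Ij}, and your Jensen bound $\E|V_2'|\ge n(1-p)^D\ge nD^{\eps/2-1}$ is Lemma~\ref{lemma:V_k_exp} (for $k=2$ the Harris step there is vacuous).

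The divergence is in how you concentrate $|V_2'|$. The paper does \emph{not} use the second moment method; instead it observes that $|V_2'|\le n$ deterministically and applies the \hyperref[theo:markov]{Reverse Markov inequality} (Lemma~\ref{lemma:V_k_conc_markov}) to get $\P[|V_2'|\le \E|V_2'|/2]\le 1-\delta/2$ with $\delta=D^{-(1-\eps/8)}$. This is far from $o(1)$, but since the Chernoff failure for $|S_1|$ is $e^{-\Omega(np)}$, one only needs $e^{-\Omega(np)}<\delta/4$, i.e.\ $n\ge CD$, which is actually \emph{stronger} than Chakraborti's stated range $n\ge CD^2/\log D$.

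Your second-moment sketch, by contrast, has genuine gaps. The ``uniform bound $d(1-p)^d=O(D^{\eps/2})$'' is false: the function $d\mapsto d(1-p)^d$ is maximized at $d\approx 1/p$ with value $\Theta(1/p)=\Theta(D/\log D)$, not $D^{\eps/2}$. With the corrected bound, $\sum_w f_w=\sum_u q_u\deg(u)=O(nD/\log D)$, and after your preprocessing at threshold $T=D\,\mathrm{polylog}(D)$ one only gets $\sum_w f_w^2\le T\sum_w f_w=O(nD^2\,\mathrm{polylog}(D)/\log D)$; plugging through, $\mathrm{Var}/(\E|V_2'|)^2=O(D^{3-\eps}\,\mathrm{polylog}(D)/n)$, which is \emph{not} $o(1)$ when $n\asymp D^2/\log D$. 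Moreover, the linearization $(1-p)^{-c_{uv}}-1\le 2pc_{uv}$ needs $pc_{uv}=O(1)$, whereas your preprocessing only guarantees $c_{uv}\le T$, hence $pc_{uv}$ up to $\mathrm{polylog}(D)$. A second-moment proof can likely be pushed through with a tighter threshold and a case split on $c_{uv}$, but the argument as written does not deliver the claimed $\mathrm{Var}/(\E)^2=O(\log^2 D/D)$; the Reverse Markov route avoids all of this.
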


To the best of our knowledge, this is the first paper to consider the extension to hypergraphs.
Our first result provides a bound on the asymptotic order of $\alpha_b(H)$, matching the results in Theorem~\ref{theo:old}\ref{item:upper-bound-graph} and Theorem~\ref{theo:Chakraborti} for $k = 2$.

\begin{theo}\label{theo:independent}
    For all $\eps > 0$ and $k \geq 2$, the following holds for $\Delta, D \in \N$ sufficiently large and some constant $C\defeq C(\eps, k) > 0$. 
    \begin{enumerate}[label=\ep{\normalfont{I}\arabic*}]
        \item\label{item:upper-bound} For $n \geq \Delta^{1/(k-1)}$, there exists an $n$-balanced $k$-uniform $k$-partite hypergraph $H$ of maximum degree at most $\Delta$ such that
        \[\alpha_b(H) \leq \left(\left(\frac{k+\eps}{k-1}\right)\frac{\log \Delta}{\Delta}\right)^{1/(k-1)}n.\]
        \item\label{item:lower-bound} For $n \geq C\,D^{1/(k-1)}(\log D)^{1 - 1/(k-1)}$, let $H$ be an $n$-balanced $k$-uniform $k$-partite hypergraph containing $Dn$ edges.
        Then,
        \[\alpha_b(H) \geq \left(\left(\frac{1-\eps}{k-1}\right)\frac{\log D}{D}\right)^{1/(k-1)}n.\]
    \end{enumerate}
\end{theo}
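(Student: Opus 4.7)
The theorem has two parts. Part \ref{item:upper-bound} I plan to establish by a random construction: take $H$ to be an $n$-balanced $k$-partite $k$-uniform hypergraph in which each of the $n^k$ potential edges is included independently with probability $q = (1-o(1))\Delta/n^{k-1}$. A first-moment computation shows the expected number of balanced independent sets of size $kt$, with $t = ((k+\eps)/(k-1)\cdot\log\Delta/\Delta)^{1/(k-1)}n$, tends to $0$: one gets $qt^{k-1} \sim (k+\eps)/(k-1)\cdot\log\Delta$, while $k\log(en/t) \sim k\log\Delta/(k-1)$, so the dominant term in the log of the expectation is $-t(\eps/(k-1))\log\Delta$. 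Chernoff bounds control the maximum degree at $(1+o(1))\Delta$, and a small edge-deletion step finishes; the condition $n \geq \Delta^{1/(k-1)}$ merely ensures $t \geq 1$. The rest of the plan is devoted to the substantive Part \ref{item:lower-bound}.

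For Part \ref{item:lower-bound} the plan is a one-shot random construction keeping the entire analysis in terms of the \emph{average} degree $D$. Fix $\eta = \eta(\eps, k) > 0$ small, set
\[
p \defeq \left(\frac{(1-\eta)\log D}{(k-1)D}\right)^{1/(k-1)},
\]
and for each $i \in [k-1]$ form a random set $A_i \subseteq V_i$ by including each vertex independently with probability $p$. Define
\[
A_k \defeq \bigl\{v \in V_k : \text{no } e \in E_H(v) \text{ has } e \cap V_i \in A_i \text{ for every } i \in [k-1]\bigr\}.
\]
Then $A_1 \cup \cdots \cup A_k$ is independent in $H$ by construction, and truncating each $A_i$ to the common size $\min_j |A_j|$ produces a balanced independent set of size $k\cdot\min_i |A_i|$. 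The remaining task is to show $\min_i |A_i| \geq (1-\eta)pn$ with positive probability.

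For $i < k$, $|A_i|$ is a sum of $n$ independent $\mathrm{Bernoulli}(p)$ variables, so Chernoff gives $|A_i| \geq (1-\eta)pn$ except with probability $\exp(-\Omega(\eta^2 pn))$. The hypothesis $n \geq CD^{1/(k-1)}(\log D)^{(k-2)/(k-1)}$ is precisely the statement $pn = \Omega(\log D)$, which makes this failure polynomially small in $D$ once $C = C(\eps, k)$ is large enough. For $i = k$, the events ``$e \cap V_j \in A_j$ for every $j < k$'' indexed by $e \in E_H(v)$ are increasing in the selection variables, so the FKG inequality (applied to the complementary decreasing events) gives $\Pr[v \in A_k] \geq (1 - p^{k-1})^{\deg_H(v)}$. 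Averaging over $v \in V_k$ by Jensen's inequality (the function $x \mapsto (1-p^{k-1})^x$ is convex, and the mean degree on $V_k$ is exactly $D$) then yields
\[
\E[|A_k|] \;\geq\; n(1-p^{k-1})^D \;\geq\; (1-o(1))\,n\,D^{-(1-\eta)/(k-1)}.
\]
The key quantitative observation is that $\E[|A_k|]/(pn) = \Omega(D^{\eta/(k-1)}/(\log D)^{1/(k-1)})\to\infty$, so no sharp concentration for $|A_k|$ is needed: since $|A_k| \leq n$ deterministically, Markov applied to $n - |A_k|$ gives
\[
\Pr\bigl[|A_k| \geq (1-\eta)pn\bigr] \;\geq\; \frac{\E[|A_k|] - (1-\eta)pn}{n - (1-\eta)pn} \;=\; \Omega\!\left(D^{-(1-\eta)/(k-1)}\right),
\]
which dominates the (polynomially smaller) Chernoff failure probabilities for $i < k$. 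A union bound produces a positive-probability event on which $|A_i| \geq (1-\eta)pn$ for every $i \in [k]$, and choosing $\eta = \eps/(2k)$ gives the claimed bound on $\alpha_b(H)$.

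I expect the main subtlety to be the FKG/Jensen bound on $\E[|A_k|]$: the degree distribution on $V_k$ may be very uneven, and it is only the convexity of $x \mapsto (1-p^{k-1})^x$ that rescues the estimate from the dreaded maximum-degree regime. The fact that the argument bypasses any concentration inequality for $|A_k|$ itself is exactly what lets the hypothesis on $n$ be phrased purely in terms of the average degree $D$, and is what recovers (and slightly sharpens) Chakraborti's range of validity when $k = 2$.
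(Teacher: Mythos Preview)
Your plan for Part~\ref{item:lower-bound} is essentially identical to the paper's: the same random selection in $V_1,\ldots,V_{k-1}$ with the deterministic ``safe'' rule for $V_k$, the same Harris/FKG + Jensen lower bound on $\E[|A_k|]$, the same Reverse Markov (your ``Markov applied to $n-|A_k|$'') to avoid any concentration for $|A_k|$, and the same comparison against the Chernoff failure for the other parts. Your explanation of why the hypothesis $n \geq C D^{1/(k-1)}(\log D)^{(k-2)/(k-1)}$ is exactly the condition $pn = \Omega_{\eps,k}(\log D)$ is correct and matches the paper's computation.

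For Part~\ref{item:upper-bound} your main idea (random $k$-partite hypergraph, first-moment bound on balanced independent sets of size $kt$) is also the paper's, but your max-degree cleanup has a gap. First, ``Chernoff bounds control the maximum degree at $(1+o(1))\Delta$'' does not follow: the per-vertex tail is $\exp(-\Theta(\Delta))$, and since there is no upper bound on $n$ in terms of $\Delta$, a union bound over $kn$ vertices can fail. Second, an ``edge-deletion step'' is dangerous here, because removing edges can \emph{create} balanced independent sets of size $kt$ that did not exist before. The paper sidesteps both issues by taking the random hypergraph on $N = n/(1-\gamma)$ vertices per part, showing (via Chernoff on the \emph{count} of high-degree vertices, not on the maximum) that at most $\gamma N$ vertices per part have degree $\geq \Delta$, and then \emph{deleting those vertices} to land on an $n$-balanced subhypergraph. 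Vertex deletion only destroys independent sets, so the first-moment conclusion survives intact.
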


We note that our bound on $n$ in \ref{item:upper-bound} matches the one in Theorem~\ref{theo:old}\ref{item:upper-bound-graph} for $k = 2$.
Furthermore, our bound on $n$ in \ref{item:lower-bound} improves upon the one in Theorem~\ref{theo:Chakraborti} for $k = 2$.
The proof of Theorem~\ref{theo:independent}\ref{item:upper-bound} involves considering the Erd\H{o}s--R\'enyi $k$-uniform $k$-partite hypergraph $\mathcal{H}(k, n, p)$.
Here, each partition has size $n$ and each valid edge is included independently with probability $p$.
We show that for an appropriate choice of $p$, an instance of $\mathcal{H}(k, n, p)$ does not contain a large balanced independent set with high probability.
For Theorem~\ref{theo:independent}\ref{item:lower-bound}, we apply a randomized procedure to construct a large balanced independent set.
The approach is inspired by that of \cite{chakraborti2023extremal} for graphs, however, due to the additional structural constraints of hypergraphs, the arguments are much more technical, requiring additional tools described in \S\ref{sec:prelim}.
Furthermore, the use of certain tools (specifically the \hyperref[theo:markov]{Reverse Markov inequality}) allow us to improve the bound on $n$, as mentioned earlier.

Our next result concerns balanced colorings (see Definition~\ref{def:color}).
For bipartite graphs, Feige and Kogan first determined the asymptotic order of $\chi_b(G)$, showing $\chi_b(G) \leq 20\Delta/(\eps^2\log\Delta)$ \cite{feige2010balanced}.
(A lower bound of $\Delta/((2+\eps)\log \Delta)$ follows from Theorem~\ref{theo:old}\ref{item:upper-bound-graph}.)
The constant factor was improved by Chakraborti, who showed the following:

\begin{theo}[\cite{chakraborti2023extremal}]\label{theo:Chakraborti_color}
    For all $\eps > 0$, the following holds for $\Delta\in \N$ sufficiently large and some constant $C\defeq C(\eps) > 0$.
    For $n \geq C\,\Delta^2\log\Delta$, let $G$ be an $n$-balanced bipartite graph of maximum degree $\Delta$.
    Then, $\chi_b(G) \leq (1+\eps)\Delta/\log\Delta$.
\end{theo}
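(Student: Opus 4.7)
The plan is to iteratively extract balanced independent sets via Theorem~\ref{theo:Chakraborti}, each forming a fresh color class. Set $G_0 \defeq G$ and $n_0 \defeq n$; given $G_i$ (which is $n_i$-balanced with average degree $D_i \leq \Delta$), apply Theorem~\ref{theo:Chakraborti} to find a balanced independent set $I_i \subseteq V(G_i)$ with
\[
    \alpha_b(G_i) \;\geq\; (1-\eps)\,n_i\,\frac{\log D_i}{D_i} \;\geq\; (1-\eps)\,n_i\,\frac{\log \Delta}{\Delta},
\]
the second inequality holding because $x \mapsto (\log x)/x$ is decreasing on $(e,\infty)$ and $D_i \leq \Delta$. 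Declare $I_i$ a new color class, set $G_{i+1} \defeq G_i - I_i$, and iterate. The hypothesis $n \geq C\,\Delta^2 \log\Delta$ is calibrated precisely so that the precondition $n_i \geq C'\,D_i^2/\log D_i$ of Theorem~\ref{theo:Chakraborti} persists across the main phase of the iteration.

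The main obstacle is compressing the number of iterations to $(1+\eps)\Delta/\log\Delta$. A direct telescoping of $n_{i+1} \leq n_i\bigl(1 - (1-\eps)(\log\Delta)/\Delta\bigr)$ yields only geometric decay, so that $n_T \leq n\exp\bigl(-T(1-\eps)(\log\Delta)/\Delta\bigr)$ would demand $\Omega(\Delta\log n/\log\Delta)$ rounds to empty the graph --- too many by a factor of $\log n$. The refinement needed is to track the edge count of $G_i$ alongside $n_i$: each extracted class of size $\Theta(n_i(\log\Delta)/\Delta)$ per side takes with it $O(n_i \log\Delta)$ incident edges, so $D_i$ shrinks as the iteration proceeds. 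Substituting the improved $D_i$ back into Theorem~\ref{theo:Chakraborti} gives a strictly larger $\alpha_b(G_i)$ in later rounds (again by monotonicity of $(\log x)/x$), and a careful bookkeeping of the joint trajectory of $(n_i, D_i)$ --- in the spirit of the graph argument in \cite{chakraborti2023extremal} --- is what is required to drive the total round count down to $(1+\eps)\Delta/\log\Delta$ with the sharp constant, as opposed to merely the $O(\Delta/\log\Delta)$ one would get from any cruder analysis.

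Once $n_i$ falls below the threshold at which Theorem~\ref{theo:Chakraborti} can still be invoked (of size roughly $O(\Delta^2/\log\Delta)$ per side), a secondary phase uses the weaker Theorem~\ref{theo:old}\ref{item:lower-bound-graph}, whose hypothesis $n_i \geq 5\Delta\log\Delta$ remains valid on this range, to continue extracting until only negligibly many vertices remain; those can then be handled by a trivial pairing / matching argument in the bipartite complement. The color cost of this cleanup is of lower order and is absorbed into the $\eps$-slack by adjusting the constant $C$. The chief technical difficulty is executing the joint $(n_i, D_i)$-analysis of the main phase with enough precision to preserve the $(1+\eps)$ factor; it is the structural simplicity of the bipartite case ($k=2$) that makes this tractable, and it is precisely the hypergraph analogue of this step that the present paper undertakes to develop.
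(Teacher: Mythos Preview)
Your approach is genuinely different from the paper's, and it carries a real gap.

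The paper does not argue Theorem~\ref{theo:Chakraborti_color} by iteration. Its proof of Theorem~\ref{theo:color} (which specializes to $k=2$, and which the paper says follows Chakraborti's own argument) is a \emph{one-shot random partial coloring}: set $q\approx(1+\gamma)\Delta/\log\Delta$, give every vertex of $V_1$ a uniformly random color in $[q]$, give each $v\in V_2$ a uniform color from its surviving list $L_\phi(v)$, trim the classes to equal size, and finish the small uncolored remainder with Lemma~\ref{lemma:rDelta}. Harris's inequality and FKG bound $\P[L_\phi(v)=\0]$, Talagrand concentrates the class sizes, and the sharp constant drops out of the single estimate $(1-1/q)^\Delta\approx\Delta^{-1/(1+\gamma)}$. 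Because nothing recurses on $n$, the color count is automatically uniform in $n$.

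Your scheme cannot match this. Theorem~\ref{theo:Chakraborti} only guarantees $s_i\geq(1-\eps)\,n_i(\log D_i)/D_i$, and since that theorem needs $D_i$ large this lower bound is at most a fixed fraction of $n_i$; the telescoping you write down therefore yields at best $\Omega(\log n)$ rounds, with no bound uniform in $n$. The ``joint $(n_i,D_i)$ trajectory'' you appeal to is not under your control: Theorem~\ref{theo:Chakraborti} hands you \emph{some} large balanced independent set, and if it consists of low-degree vertices then $D_{i+1}>D_i$. Concretely, take $G$ to be $K_{\Delta,\Delta}$ together with $n-\Delta$ isolated vertices on each side (this satisfies the hypotheses once $n\geq C\Delta^2\log\Delta$). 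Every balanced independent set meets the $K_{\Delta,\Delta}$ core on at most one side, so a perfectly legal run of your procedure may strip away only isolated vertices and terminate with residue exactly $K_{\Delta,\Delta}$, whose bipartite complement is edgeless---no perfect matching, hence no balanced coloring of the residue exists, and neither Lemma~\ref{lemma:rDelta} nor any ``trivial pairing'' can finish. The random-coloring route avoids this precisely because it fixes the whole coloring before ever passing to an induced subgraph.
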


Once again, the relationship between $n$ and $\Delta$ was not stated explicitly stated in their paper.
This is the first paper to consider balanced colorings of hypergraphs.
Our result provides a bound on the asymptotic order of $\chi_b(H)$, matching the above result for $k = 2$.

\begin{theo}\label{theo:color}
    For all $\eps > 0$ and $k \geq 2$, the following holds for $\Delta \in \N$ sufficiently large and some constant $C\defeq C(\eps, k) > 0$.
    For $n \geq C\Delta^{5 + 1/(k-1)}\log\Delta$, let $H$ be an $n$-balanced $k$-uniform $k$-partite hypergraph of maximum degree $\Delta$.
    Then, 
    \[\chi_{b}(H) \leq \left(\left(k - 1 + \eps\right)\frac{\Delta}{\log \Delta}\right)^{1/(k-1)}.\]
\end{theo}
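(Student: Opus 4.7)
The plan is to iteratively apply Theorem~\ref{theo:independent}\ref{item:lower-bound} to extract balanced independent sets from $H$, assigning each one a distinct color. Set $q \defeq \lceil((k-1+\eps)\Delta/\log\Delta)^{1/(k-1)}\rceil$ and fix $\eps'\in(0,\eps)$ small enough (with $\eps'$ depending on $k$ and $\eps$). Starting from $H_0\defeq H$, at step $t\geq 1$ I apply Theorem~\ref{theo:independent}\ref{item:lower-bound} (with error parameter $\eps'$) to $H_{t-1}$ to extract a balanced independent set $I_t$ of size at least $ks_t$, where
\[
s_t \defeq n_{t-1}\left(\frac{(1-\eps')\log D_{t-1}}{(k-1)\,D_{t-1}}\right)^{1/(k-1)},
\]
and $n_{t-1}$, $D_{t-1}$ are the part size and average degree of $H_{t-1}$. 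I color $I_t$ with color $t$, set $H_t\defeq H_{t-1}\setminus I_t$, and repeat until $H_{t^*}$ is edgeless, at which point the residual $n_{t^*}$ vertices per part form a balanced independent set and receive color $t^*+1$. The goal is to show $t^*+1 \leq q$.

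The heart of the analysis is tracking the joint evolution of $(n_t, D_t)$. Since removing $ks_t$ vertices—each of average degree $\approx D_{t-1}$ in $H_{t-1}$—deletes roughly $k s_t D_{t-1}$ edges, one expects $|E_t| \approx |E_{t-1}| - k s_t D_{t-1}$, and hence $D_t \approx D_{t-1}\bigl(1 - (k-1)s_t/n_{t-1}\bigr)$. Together with $n_t = n_{t-1} - s_t$, this yields the approximate invariant $D_t\,n_t^{-(k-1)} \approx \Delta\,n^{-(k-1)}$, that is, $D_t \approx \Delta(n_t/n)^{k-1}$. Substituting this back into the formula for $s_t$ gives
\[
s_t \;\approx\; n\left(\frac{(1-\eps')\log\Delta}{(k-1)\Delta}\right)^{1/(k-1)},
\]
which is essentially constant across iterations. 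Consequently the process exhausts $V(H)$ in $t^* \approx n/s_1 = ((k-1)\Delta/((1-\eps')\log\Delta))^{1/(k-1)}$ steps, and for $\eps'$ chosen small enough relative to $\eps$ (it suffices that $(1-\eps')(k-1+\eps) > k-1$) we obtain $t^*+1 \leq q$.

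The main obstacle is twofold. First, Theorem~\ref{theo:independent}\ref{item:lower-bound} only guarantees the \emph{existence} of a sufficiently large balanced independent set, not that its vertices have near-average degree; in a pathological case, $I_t$ could consist mostly of low-degree vertices, causing $D_t$ to increase rather than decrease and breaking the invariant. Handling this probably requires a more refined argument—such as restricting attention to a dense subhypergraph, or averaging over a collection of extractable balanced independent sets and selecting one that removes $\approx k s_t D_{t-1}$ edges. Second, one must verify that the hypothesis $n_{t-1}\geq C\,D_{t-1}^{1/(k-1)}(\log D_{t-1})^{1-1/(k-1)}$ of Theorem~\ref{theo:independent}\ref{item:lower-bound} is preserved throughout all $\sim q$ iterations. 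Since $D_t\leq\Delta$ always (because each $V_i$-vertex of $H_t$ has degree $\leq\Delta$), the right-hand side is bounded by $C\,\Delta^{1/(k-1)}(\log\Delta)^{1-1/(k-1)}$, and the polynomial slack $\Delta^{5}$ built into the hypothesis $n\geq C\,\Delta^{5+1/(k-1)}\log\Delta$ is exactly what allows $n_t$ to remain comfortably above this threshold as it shrinks. A short supplementary argument (e.g., a trivial coloring, or one more application of Theorem~\ref{theo:independent}\ref{item:lower-bound} to the small edgeless residue) absorbs any leftover into the final color without exceeding $q$.
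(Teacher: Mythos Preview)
Your approach is genuinely different from the paper's: the paper does \emph{not} iterate Theorem~\ref{theo:independent}\ref{item:lower-bound}. Instead it performs a single-shot random coloring (assign each vertex in $V_1,\ldots,V_{k-1}$ a uniform color in $[q]$, then color each $v\in V_k$ from its available list $L_\phi(v)$), balances the color classes by uncoloring a few vertices per class, and finally shows that the residual uncolored hypergraph has maximum degree at most $\tilde\Delta\approx \gamma q/k$, so that Lemma~\ref{lemma:rDelta} finishes the job with $k\tilde\Delta+1$ extra colors. The analysis uses Harris/FKG to lower-bound $\P[c\in L_\phi(v)]$, then negative correlation across colors to bound $\P[L_\phi(v)=\varnothing]$, and Talagrand to concentrate the color-class sizes; this is where the $\Delta^{5+1/(k-1)}$ in the hypothesis on $n$ comes from.

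Your proposal, as written, has a real gap that you yourself flag but do not close. The entire count rests on the invariant $D_t\approx\Delta(n_t/n)^{k-1}$, because that is what makes $s_t$ essentially constant (so that $\sim q$ rounds exhaust $V(H)$). Without it, the only control you have is $D_t\le\Delta$, which via monotonicity of $x\mapsto(\log x)/x$ yields merely $s_t\ge p\,n_{t-1}$ with $p=((1-\eps')\log\Delta/((k-1)\Delta))^{1/(k-1)}$; this is multiplicative shrinkage $n_t\le n(1-p)^t$, and after $q\approx 1/p$ rounds you are left with $n_q\approx n/e$, nowhere near empty. So the invariant is not a cosmetic refinement but the crux of the argument, and nothing in Theorem~\ref{theo:independent}\ref{item:lower-bound} (or its proof) lets you choose the balanced independent set to hit $\approx k s_t D_{t-1}$ edges: the construction there first includes each $v\notin V_k$ independently and then \emph{arbitrarily} trims to balance, so there is no degree control on the trimmed set. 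Your suggested fixes (pass to a dense subhypergraph, or average over extractable sets) are plausible directions but each requires substantial new work; neither is carried out. A secondary but related issue: Theorem~\ref{theo:independent}\ref{item:lower-bound} needs $D$ large, yet under your heuristic $D_t\to 0$ as $n_t\to 0$, so late rounds fall outside its hypotheses and you would need a separate endgame (e.g.\ something like Lemma~\ref{lemma:rDelta}) anyway. In short, the iterative-extraction route may be salvageable, but what you have is a sketch with the key step missing, whereas the paper's single-shot approach avoids the degree-tracking problem entirely.
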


We remark that this result is sharp up to a factor of $k^{1/(k-1)}$ (as a result of Theorem~\ref{theo:independent}\ref{item:upper-bound}).
The proof is constructive, following a similar argument to that of Theorem~\ref{theo:Chakraborti_color}.
In particular, we apply a randomized procedure to construct a ``good'' partial $q$-coloring using $(1-\eps)\,q$ colors such that the subhypergraph induced by the uncolored vertices can be colored with the remaining $\eps\,q$ colors.
We remark that the use of certain probabilistic tools (namely, \hyperref[theo:harris]{Harris's inequality} and \hyperref[theo:Talagrand]{Talagrand's inequality}) allows for a much simpler analysis of the procedure (even for $k = 2$).

A curious feature of Theorem~\ref{theo:color} is that the bound matches known results on the \textit{(list) chromatic numbers} of other hypergraphs.
Frieze and Mubayi showed that $\chi(H) = O\left(\left(\Delta(H)/\log\Delta(H)\right)^{1/(k-1)}\right)$ for \textit{simple hypergraphs}, i.e., where every pair of vertices in $H$ are contained in at most one common edge \cite{frieze2013coloring}.
Iliopoulos proved an analogous bound for hypergraphs of girth at least $5$ \cite{iliopoulos2021improved}.
For $k = 3$, Cooper and Mubayi extended this result to triangle-free hypergraphs \cite{cooper2016coloring}.
A triangle in a hypergraph is a set of three pairwise intersecting edges with no common vertex.
This result was recently extended to include all $k$-uniform triangle-free hypergraphs for $k \geq 3$ by Li and Postle \cite{li2022chromatic}.

We conclude this section with a discussion of potential future directions of inquiry.
First, we note that the lower bounds on $n$ in the results of Theorem~\ref{theo:independent} match those known for the $k = 2$ case, however, the lower bound on $n$ is far from optimal in Theorem~\ref{theo:color}.
In particular, the latter result only holds for sparse hypergraphs (as $\Delta$ can be as large as $n^{k-1}$ in theory).

\begin{ques*}
    Can we get a similar bound on $\chi_b(H)$ for hypergraphs satisfying $n \approx \Delta^{\Theta(1/(k-1))}$?
\end{ques*}

Chakraborti discusses the relationship of Theorem~\ref{theo:Chakraborti_color} to the Johansson--Molloy theorem \cite{Joh_triangle, Molloy}, which provides a similar bound for the list chromatic number of triangle-free graphs.
In general, results on bipartite graphs tend to hold for triangle-free graphs as most triangle-free graphs are bipartite (and all bipartite graphs are triangle-free).
In the hypergraph case, however, $k$-partite hypergraphs need not be triangle-free!
This makes our result somewhat surprising as it matches the bound for the list chromatic number of triangle-free hypergraphs as mentioned earlier.
In subsequent work, the author shows how the tools in this paper apply for list colorings of $k$-partite hypergraphs \cite{corr}.

Another avenue for research is to determine the computational hardness of finding large balanced independent sets.
In \cite{perkins2024hardness}, Perkins and Wang determined that no \textit{local} or \textit{low-degree} algorithm can find a balanced independent with at least $(1+\eps)n\log d/d$ vertices in each partition of $\mathcal{H}(2, n, d/n)$ with high probability.

\begin{ques*}
    Are there local or low-degree algorithms that can find a balanced independent set with at least $n\left(\frac{(1+\eps)}{(k-1)}\,\log d/d)\right)^{1/(k-1)}$ vertices in each partition of $\mathcal{H}(k, n, d/n^{k-1})$ with high probability?
\end{ques*}

The techniques employed in this paper are similar to those for other combinatorial problems related to bipartite graphs.
It would be worth investigating when results on bipartite graphs extend to the $k$-partite setting.

The rest of the paper is structured as follows.
In \S\ref{sec:prelim}, we will prove some preliminary facts, as well as describe the probabilistic tools we will employ.
In \S\ref{sec:indep}, we will prove Theorem~\ref{theo:independent}, and in \S\ref{sec:color}, we will prove Theorem~\ref{theo:color}.

\section{Preliminaries}\label{sec:prelim}

For a $k$-uniform $k$-partite hypergraph $H = (V_1\cup\cdots\cup V_k, E)$, the $k$-partite complement of $H$ is the hypergraph $H^c = (V_1\cup\cdots\cup V_k, E')$, where $E'$ contains all valid edges not in $E$ (an edge is valid if it has exactly one endpoint in each partition $V_i$).
For any $S\subseteq V(H)$ such that $|S\cap V_i| \leq 1$ for each $i \in [k]$, we let $\deg_H(S)$ be the number of edges in $H$ containing $S$, and we define $\delta_j(H) \defeq \min_{|S| = j}\deg_H(S)$ (where we only consider $S$ such that $|S\cap V_i| \leq 1$ for each $i \in [k]$).
Finally, recall that a matching in a hypergraph is a set of pairwise disjoint edges.
A perfect matching $M$ in $H$ is a matching such that $v$ is contained in some edge in $M$ for every $v \in V(H)$.
We begin with the following fact for hypergraphs, which is identical to an observation made for bipartite graphs in \cite{feige2010balanced}.

\begin{fact}\label{fact:coloring_matching}
    A $k$-uniform $k$-partite hypergraph $H$ has a balanced coloring if and only if its $k$-partite complement has a perfect matching.
\end{fact}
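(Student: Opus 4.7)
The plan is to prove both directions by translating between balanced color classes and transversals (edges of the $k$-partite complement). The key observation is that in a $k$-uniform $k$-partite hypergraph, a valid edge is precisely a transversal---a $k$-subset containing exactly one vertex from each $V_i$. So membership in $E(H^c)$ for a transversal $T$ is the same as saying $T \notin E(H)$, which is the same as $T$ being an independent set in $H$ (since any proper subset of $T$ misses some $V_i$ and hence cannot be an edge of $H$).

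For the forward direction, I would start with a balanced $q$-coloring with color classes $C_1,\ldots,C_q$. Since each $C_i$ is balanced, there exists $a_i \in \N$ with $|C_i \cap V_j| = a_i$ for every $j \in [k]$, and $\sum_i a_i = n$. Fix arbitrary bijections between $C_i \cap V_1$ and each $C_i \cap V_j$; grouping vertices that correspond under these bijections partitions $C_i$ into $a_i$ pairwise disjoint transversals. Each such transversal is a subset of the independent set $C_i$, so it is not in $E(H)$, and by the observation above it lies in $E(H^c)$. Taking the union over all color classes yields $n$ pairwise disjoint edges of $H^c$ that together cover $V(H)$, i.e., a perfect matching.

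For the backward direction, given a perfect matching $M$ in $H^c$, assign each edge $e \in M$ its own color. Each color class is a single transversal and therefore is automatically balanced (it meets each $V_j$ in exactly one vertex). Since $e \in E(H^c)$, again by the observation $e$ is independent in $H$. So the coloring is proper and balanced, using $|M| = n$ colors.

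I do not anticipate a genuine obstacle here: the statement is essentially definitional once one recognizes that transversals are the natural objects dualized between the two notions. The only subtlety worth flagging explicitly is why a balanced color class decomposes into transversals (it does so because the partition sizes $|C_i \cap V_j|$ agree across $j$) and why a single transversal not in $E(H)$ is independent in $H$ (any proper subset fails to meet some $V_j$, so cannot be a $k$-uniform $k$-partite edge).
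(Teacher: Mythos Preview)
Your proposal is correct and follows essentially the same approach as the paper: decompose each balanced color class into transversals to obtain a perfect matching in $H^c$ for the forward direction, and color each matching edge with its own color for the backward direction. Your write-up is simply more explicit about why a transversal contained in an independent set must lie in $E(H^c)$, which the paper leaves implicit.
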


\begin{claimproof}
    For the forward implication, we can find a perfect matching in each color class (as it is a balanced independent set).
    This forms a perfect matching in $H^c$.
    For the backward implication, assign matched vertices the same color.
\end{claimproof}

As a consequence, we have the following lemma.

\begin{Lemma}\label{lemma:rDelta}
    If $H$ is an $n$-balanced $k$-partite hypergraph of maximum degree $\Delta \leq n/2$, then $\chi_b(H) \leq k\Delta + 1$.
\end{Lemma}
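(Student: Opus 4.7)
The plan is to combine Fact~\ref{fact:coloring_matching} with a greedy coloring of a suitable auxiliary $k$-uniform hypergraph.

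First, I would use the hypothesis $\Delta \leq n/2$ to produce a perfect matching $M = \{e_1, \ldots, e_n\}$ in $H^c$. For any $(k-1)$-tuple $S$ consisting of one vertex in each of $k-1$ distinct partitions, every edge of $H$ through $S$ contains a fixed vertex of $S$, so $\deg_H(S) \leq \Delta$ and hence $\deg_{H^c}(S) \geq n - \Delta \geq n/2$, i.e., $\delta_{k-1}(H^c) \geq n/2$. Invoking a codegree-based matching theorem for $k$-partite $k$-uniform hypergraphs (Hall's theorem when $k = 2$; the theorem of Aharoni--Georgakopoulos--Spr\"ussel when $k = 3$, together with its higher-uniformity analogues for $k \geq 4$) yields the desired $M$.

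Next I would introduce an auxiliary $k$-uniform hypergraph $\widetilde{H}$ with $V(\widetilde{H}) = M$ and edge set $\{\widetilde{f} : f \in E(H)\}$, where $\widetilde{f} \defeq \{e_j \in M : f \cap e_j \neq \emptyset\}$. Since the $k$ vertices of $f$ are distributed one per partition and each $e_j$ contains exactly one vertex per partition, every $\widetilde{f}$ has size exactly $k$. The degree of $e_i$ in $\widetilde{H}$ equals the number of $H$-edges meeting $e_i$, which is bounded by $\sum_{v \in e_i} \deg_H(v) \leq k\Delta$; thus $\Delta(\widetilde{H}) \leq k\Delta$. A standard greedy argument now yields $\chi(\widetilde{H}) \leq \Delta(\widetilde{H}) + 1 \leq k\Delta + 1$: when coloring a vertex $e_i$, each edge of $\widetilde{H}$ through $e_i$ forbids at most one color (namely, the common color of its other $k-1$ vertices, if these happen to have already been assigned the same color), so some color in $[k\Delta + 1]$ is always available.

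Finally, I would lift a proper coloring $\phi : M \to [k\Delta + 1]$ of $\widetilde{H}$ to $V(H)$ by assigning each $v \in e_i$ the color $\phi(e_i)$. Every resulting color class is a union of matching edges of $M$, so it contains exactly $|\{e_i : \phi(e_i) = c\}|$ vertices in each $V_j$, giving the balanced property; and if some $f \in E(H)$ were monochromatic, then $\widetilde{f}$ would be monochromatic in $\widetilde{H}$, contradicting the properness of $\phi$. This proves $\chi_b(H) \leq k\Delta + 1$.

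The main technical step is the first one: establishing the perfect matching in $H^c$ requires a nontrivial codegree threshold theorem for $k$-partite $k$-uniform hypergraphs when $k \geq 3$. Once $M$ is in hand, both the degree bound on $\widetilde{H}$ and the greedy coloring are essentially routine, and the key payoff is that passing to $\widetilde{H}$ upgrades the naive pairwise auxiliary graph (whose maximum degree is only $k(k-1)\Delta$) into a $k$-uniform hypergraph in which $k-1$ vertices must be monochromatic to forbid a color, saving the factor of $k - 1$ needed to reach $k\Delta + 1$.
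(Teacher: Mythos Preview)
Your proof is correct and follows essentially the same approach as the paper: both obtain a perfect matching $M$ in $H^c$ via the Aharoni--Georgakopoulos--Spr\"ussel codegree theorem (this is exactly the reference \cite{aharoni2009perfect} used in the paper), and then greedily color the matching edges one at a time using the bound $|\{f \in E(H) : f \cap e_i \neq \emptyset\}| \leq k\Delta$. One small inaccuracy worth noting: your claim that $|\widetilde{f}| = k$ is not quite right, since two vertices of $f$ lying in different parts can belong to the same matching edge $e_j$; however this is harmless, as the greedy argument only needs that each $H$-edge through $e_i$ forbids at most one color.
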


\begin{proof}
    Consider the $k$-partite complement $H^c$ of $H$.
    Note that $\delta_{k-1}(H^c) \geq n - \Delta \geq n/2$.
    It follows from \cite[Theorem 2]{aharoni2009perfect} that $H^c$ contains a perfect matching $M \defeq \set{e_1, \ldots, e_n}$.
    Let us color the vertices of $H$ such that for each $i \in [n]$ the vertices in $e_i$ receive the same color.
    Clearly this is a balanced coloring.
    When coloring the vertices in $e_i$, we must ensure the resulting coloring is proper.
    To this end, we note:
    \[|\set{e \in E(H)\,:\, e\cap e_i \neq \0}| \leq k\Delta.\]
    In particular, there is always at least one available color for the vertices in $e_i$, as desired.
\end{proof}

In our proofs, we will employ three concentration tools.
The first of these is Markov's inequality.

\begin{theo}[{Markov; \cite[\S3]{MolloyReed}}]\label{theo:markov}
    Let $X$ be a non-negative random variable. 
    Then, 
    \[\P[X \geq a] \leq \frac{\E[X]}{a}.\]
    Furthermore, if $X \leq t$, then
    \[\P[X \leq a] \leq \frac{t - \E[X]}{t - a}.\]
\end{theo}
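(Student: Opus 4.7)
The plan is to prove the two parts separately, with the second (reverse) inequality reduced to the first via a change of variable.

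For the standard inequality, I would start from the trivial pointwise bound \(X \geq a \cdot \bbone[X \geq a]\), which holds because \(X\) is non-negative: on the event \(\{X \geq a\}\) we have \(X \geq a\), and on its complement the right-hand side is \(0\), which is at most \(X\) by non-negativity. Taking expectations and using linearity gives \(\E[X] \geq a\,\P[X \geq a]\), and rearranging yields the claim (the case \(a \leq 0\) is trivial since probabilities are at most \(1\), so we may assume \(a > 0\) before dividing).

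For the reverse inequality, the natural move is to introduce the auxiliary variable \(Y \defeq t - X\). The hypothesis \(X \leq t\) makes \(Y\) non-negative, and its expectation is \(\E[Y] = t - \E[X]\). The event \(\{X \leq a\}\) translates to \(\{Y \geq t - a\}\), so applying the first part of the theorem to \(Y\) at the threshold \(t - a\) (which we may take to be positive, else the conclusion is trivial) gives
\[\P[X \leq a] \;=\; \P[Y \geq t - a] \;\leq\; \frac{\E[Y]}{t - a} \;=\; \frac{t - \E[X]}{t - a},\]
as desired.

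Neither step presents a genuine obstacle; this is a textbook fact. The only thing to be careful about is the edge cases \(a \leq 0\) in the first part and \(a \geq t\) in the second, where the stated bound is either trivial or vacuous and one should simply note this before dividing. Everything else is a one-line application of monotonicity of expectation.
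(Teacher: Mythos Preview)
Your argument is correct and is the standard textbook derivation: the indicator bound \(X \geq a\,\bbone[X \geq a]\) for the first part, and the substitution \(Y \defeq t - X\) to reduce the reverse inequality to the first. There is nothing to compare against, however: the paper does not give its own proof of this theorem. It is stated as a cited preliminary fact (with reference to \cite[\S3]{MolloyReed}) and used as a black box later in the proofs of Lemmas~\ref{lemma:V_k_conc_markov} and~\ref{lemma:conc_uncolor}. So your proposal is fine, but it supplies a proof the paper deliberately omits.
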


The second statement above is often referred to as the \emphd{Reverse Markov inequality}.
The next tool we will use is the Chernoff Bound for binomial random variables. 
We state the two-tailed version below:

\begin{theo}[{Chernoff; \cite[\S5]{MolloyReed}}]\label{theo:chernoff}
    Let $X$ be a binomial random variable on $n$ trials with each trial having probability $p$ of success. Then for any $0 \leq \xi \leq \E[X]$, we have
    \begin{align*}
        \P\Big[\big|X - \E[X]\big| \geq \xi\Big] < 2\exp{\left(-\frac{\xi^2}{3\E[X]}\right)}. 
    \end{align*}
\end{theo}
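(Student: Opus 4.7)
The plan is to derive Theorem~\ref{theo:chernoff} by the standard exponential moment (``Chernoff'') argument, since the inequality is classical rather than a new contribution and the authors explicitly cite it from \cite{MolloyReed}. Write $X = \sum_{i=1}^n X_i$ as a sum of i.i.d.\ Bernoulli$(p)$ variables and set $\mu \defeq \E[X] = np$.

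For the upper tail, first apply the forward part of Theorem~\ref{theo:markov} to the non-negative random variable $e^{tX}$ for an arbitrary $t > 0$:
\[\P[X \geq \mu + \xi] \;=\; \P\!\left[e^{tX} \geq e^{t(\mu+\xi)}\right] \;\leq\; e^{-t(\mu+\xi)}\,\E\!\left[e^{tX}\right].\]
By independence of the $X_i$ together with the inequality $1+x \leq e^x$, the moment generating function satisfies $\E[e^{tX}] = (1-p+pe^t)^n \leq \exp(\mu(e^t-1))$. Optimizing by taking $t = \log(1+\xi/\mu)$ yields the textbook form $\P[X \geq \mu+\xi] \leq \exp(-\mu\,h(\xi/\mu))$, where $h(x) \defeq (1+x)\log(1+x) - x$.

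For the lower tail, the symmetric argument applied with $t < 0$ (equivalently, to $-X$) gives $\P[X \leq \mu-\xi] \leq \exp(-\mu\,h(-\xi/\mu))$, and a union bound combines the two tails to produce the factor $2$ appearing in the statement. The remaining step is to replace the Kullback--Leibler-type exponent by the cleaner quadratic $\xi^2/(3\mu)$. This follows from the elementary inequality $h(\pm x) \geq x^2/(2+2x/3)$ valid for $0 \leq x \leq 1$, together with the hypothesis $\xi \leq \E[X] = \mu$, which forces $2\xi/(3\mu) \leq 2/3$ and lets us absorb this lower-order term into the denominator to produce the stated $3\mu$.

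The only delicate part of the proof is tracking the constants in the final step; everything else is routine differentiation. Since the statement matches the form appearing in \cite[\S5]{MolloyReed} verbatim, the cleanest option is simply to invoke the reference, which is presumably why the authors do not reproduce the calculation here.
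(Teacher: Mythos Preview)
The paper does not prove this statement at all; it is quoted as a black box from \cite[\S5]{MolloyReed}, exactly as you surmise in your final paragraph. Your sketch of the standard exponential-moment argument is correct (and the final numerical step $2+2\xi/(3\mu)\leq 8/3<3$ indeed yields the strict inequality with the constant $3$ in the denominator), so there is nothing to compare.
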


We will also take advantage of Talagrand's inequality.
The original version in \cite[\S10.1]{MolloyReed} contained an error which was rectified in a subsequent paper of the same authors. 
We state the version from \cite{molloy2014colouring} here.

\begin{theo}[{Talagrand's Inequality; \cite{molloy2014colouring}}]\label{theo:Talagrand}
    Let $X$ be a non-negative random variable, not identically $0$, which is a function of $n$ independent trials $T_1$, \ldots, $T_n$. Suppose that $X$ satisfies the following for some $\gamma$, $r > 0$: 
    \begin{enumerate}[label=\ep{\normalfont{}T\arabic*}]
        \item Changing the outcome of any one trial $T_i$ can change $X$ by at most $\gamma$.
        \item For any $s>0$, if $X \geq s$ then there is a set of at most $rs$ trials that certify $X$ is at least $s$.
    \end{enumerate}
    Then for any $\xi \geq 0$, we have
    \begin{align*}
        \P\Big[\big|X-\E[X]\big| \geq \xi + 20\gamma\sqrt{r\E[X]} + 64\gamma^2r\Big]\leq 4\exp{\left(-\frac{\xi^2}{8\gamma^2r(\E[X] + \xi)}\right)}.
    \end{align*}
\end{theo}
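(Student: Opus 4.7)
The plan is to derive the stated inequality from Talagrand's convex distance inequality applied to the product probability space of the trials $T_1, \ldots, T_n$. For a measurable set $A$, define the Talagrand convex distance
\[d_T(\omega, A) \defeq \sup_{\alpha \in \R^n_{\geq 0},\, \|\alpha\|_2 \leq 1}\ \inf_{\omega^\star \in A}\ \sum_{i\,:\,\omega_i \neq \omega^\star_i}\alpha_i.\]
Talagrand's core estimate, which I would invoke as a black box, states that $\E[\exp(d_T(\omega,A)^2/4)] \leq 1/\P[\omega \in A]$. The overall strategy is to apply this with $A \defeq \set{X \geq s}$ for an appropriate threshold $s$ and use hypotheses \ep{T1} and \ep{T2} to push a pointwise lower bound on $d_T(\omega, A)$ whenever $X(\omega)$ is substantially less than $s$.

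The key lemma I would prove is: if $X(\omega') < s - t$, then $d_T(\omega', A) \geq t/(\gamma\sqrt{rs})$. For each $\omega \in A$ use \ep{T2} to obtain an index set $I = I(\omega) \subseteq [n]$ with $|I| \leq rs$ that certifies $X(\omega) \geq s$. Flipping the coordinates of $\omega'$ at positions $D_\omega \defeq \set{i \in I : \omega_i \neq \omega'_i}$ to match $\omega$ produces a point still in $A$, and \ep{T1} then forces $\gamma|D_\omega| \geq s - X(\omega') > t$, so $|D_\omega| > t/\gamma$. Via the dual reformulation of $d_T$ as the Euclidean distance from the origin to the convex hull of $\set{(\bbone[\omega_i \neq \omega'_i])_{i \in [n]} : \omega \in A}$, one checks that no convex combination can have Euclidean norm smaller than $t/(\gamma\sqrt{rs})$, with the vectors $\alpha(\omega) \defeq \bbone[i \in I(\omega)]/\sqrt{|I(\omega)|}$ serving as test directions.

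Plugging this lower bound into Talagrand's core estimate yields $\P[X \geq s]\cdot\P[X < s - t] \leq \exp(-t^2/(4\gamma^2 rs))$. Choosing $s$ equal to the median $m$ of $X$ makes $\P[X \geq s] \geq 1/2$ and produces a one-sided lower-tail bound; a symmetric argument with $A = \set{X \leq m}$ produces the upper tail. To convert from median to mean, I would integrate these bounds to show $|\E[X] - m| = O(\gamma\sqrt{r\E[X]} + \gamma^2 r)$ and then shift the threshold accordingly; this is where the additive correction $20\gamma\sqrt{r\E[X]} + 64\gamma^2r$, the replacement of the denominator $4\gamma^2 rs$ by $8\gamma^2r(\E[X] + \xi)$, and the prefactor $4$ all arise.

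The genuine obstacle is the convex distance inequality itself, whose standard proof is a nontrivial induction on $n$ via a one-coordinate convexity lemma; treating it as a black box is essentially necessary for a reasonable-length exposition. The remaining argument is largely algebraic, but the median-to-mean bookkeeping is where the precise numerical constants $20$ and $64$ must be tracked carefully, and I expect this to be the most error-prone part of writing out a complete proof.
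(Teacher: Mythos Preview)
The paper does not prove this theorem at all: it is quoted as a black-box tool from \cite{molloy2014colouring} (itself a corrected form of the version in \cite{MolloyReed}), with no argument given. Your proposal therefore goes well beyond what the paper does. What you outline is the standard route---Talagrand's convex distance inequality on the product space, combined with the certificate/Lipschitz hypotheses \ep{T1}--\ep{T2} to lower-bound $d_T(\omega',A)$, then a median-to-mean transfer---and this is indeed how the cited references derive the inequality, so your sketch is in line with the literature even if the present paper omits it.

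One small caution on the sketch itself: in the step where you lower-bound the convex distance via the dual (convex-hull) formulation, the test directions $\alpha(\omega)=\bbone[i\in I(\omega)]/\sqrt{|I(\omega)|}$ depend on $\omega\in A$, whereas the primal definition of $d_T$ requires a single $\alpha$ chosen before minimizing over $\omega$. The correct argument uses the dual form $d_T(\omega',A)=\min\{\|\nu\|_2:\nu\in\mathrm{conv}\,V(\omega',A)\}$ together with the observation that every extreme point $v^{(\omega)}$ satisfies $\langle \bbone_{I(\omega)},v^{(\omega)}\rangle=|D_\omega|>t/\gamma$ with $|I(\omega)|\le rs$, and then a Cauchy--Schwarz/averaging step to pass to convex combinations. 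This is routine but worth stating precisely if you write it out in full; the constants $20$, $64$, $8$, and the prefactor $4$ are exactly those arising from this bookkeeping in \cite{molloy2014colouring}.
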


We will also need the FKG inequality \cite{fortuin1971correlation} along with a special case of it, dating back to Harris \cite{Harris} and Kleitman \cite{Kleitman}.
Before we state the inequalities, we make a few definitions.
Let $X$ be a set, $f, g\,:\, 2^X \to \R$, and $\mathcal{A} \subseteq 2^X$ be such that
\begin{itemize}
    \item $f(S) \leq f(T)$ whenever $S \subseteq T$, 
    \item $g(S) \geq g(T)$ whenever $S \subseteq T$, and
    \item $T\in \mathcal{A} \implies S \in \mathcal{A}$ whenever $S \subseteq T$.
\end{itemize}
We say that $f$ is an \emphd{increasing function} on $2^X$, $g$ is a \emphd{decreasing function} on $2^X$, and $\mathcal{A}$ is a \emphd{decreasing family} of subsets of $X$.
The original version of the theorem below is stated with regards two decreasing families, however, as the intersection of decreasing families is decreasing, it can be shown that the inequality holds in the following more general form.

\begin{theo}[{Harris's inequality/Kleitman's Lemma \cite[\S6]{AlonSpencer}}]\label{theo:harris}
    Let $X$ be a finite set and let $f$ and $g$ be an increasing and decreasing function on $2^X$, respectively.
    Let $S \subseteq X$ be a random subset of $X$ obtained by selecting each $x \in X$ independently with probability $p_x \in [0,1]$. If $\mathcal{A}_1, \ldots, \mathcal{A}_n$ are decreasing families of subsets of $X$, then 
    \begin{itemize}
        \item (FKG Inequality) $\E[f(S)g(S)] \leq \E[f(S)]\E[g(S)]$, and
        \item (Harris's Inequality) $\P\left[S \in \bigcap_{i \in [n]}\mathcal{A}_i\right] \,\geq\, \prod_{i \in [n]}\P[S \in \mathcal{A}_i]$.
    \end{itemize}
\end{theo}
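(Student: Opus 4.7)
The plan is to prove the FKG inequality by induction on the size of the ground set $X$, and then to obtain Harris's inequality by running the same induction in the regime where both functions are decreasing and applying the resulting bound to indicators of the families $\mathcal{A}_i$.

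\textbf{Induction on $|X|$ for FKG.} For $|X| = 1$, writing $X = \set{x}$ and $p \defeq p_x$, a direct expansion of the two sides gives
\[
\E[f(S)g(S)] - \E[f(S)]\,\E[g(S)] \;=\; p(1-p)\bigl(f(\set{x}) - f(\emptyset)\bigr)\bigl(g(\set{x}) - g(\emptyset)\bigr),
\]
which is $\leq 0$ when $f$ is increasing and $g$ is decreasing (and $\geq 0$ when $f, g$ are both decreasing). For the inductive step, split $X = Y \cup \set{x}$ with $|Y| = |X| - 1$ and set $S' \defeq S \cap Y$. Define the conditional expectations $\tilde f(T) \defeq (1 - p_x) f(T) + p_x f(T \cup \set{x})$ and $\tilde g(T) \defeq (1 - p_x) g(T) + p_x g(T \cup \set{x})$ for $T \subseteq Y$. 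Monotonicity of $f, g$ transfers directly to $\tilde f, \tilde g$ on $2^Y$. Conditioning on $S'$, the base case applied to the single independent trial at $x$ yields $\E[f(S)g(S) \mid S'] \leq \tilde f(S')\,\tilde g(S')$, and the inductive hypothesis on $Y$ then produces
\[
\E[f(S) g(S)] \;\leq\; \E[\tilde f(S')\,\tilde g(S')] \;\leq\; \E[\tilde f(S')]\,\E[\tilde g(S')] \;=\; \E[f(S)]\,\E[g(S)],
\]
where the final equality is the tower property.

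\textbf{Deduction of Harris.} Running the identical induction in the two-decreasing regime yields $\E[g_1(S) g_2(S)] \geq \E[g_1(S)]\,\E[g_2(S)]$ whenever $g_1, g_2$ are both decreasing. Since the indicator $\bbone_\mathcal{A}$ of a decreasing family $\mathcal{A}$ is a decreasing $\set{0,1}$-valued function, and since $\bbone_{\cap_{i < n} \mathcal{A}_i} = \prod_{i < n} \bbone_{\mathcal{A}_i}$ is itself the indicator of a decreasing family, Harris's inequality follows by induction on $n$: the case $n = 1$ is trivial, and the step applies the two-decreasing inequality to $\bbone_{\cap_{i < n} \mathcal{A}_i}$ and $\bbone_{\mathcal{A}_n}$, after which the inductive hypothesis is applied to the first factor.

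\textbf{Main obstacle.} The essential delicate point is the inductive step for FKG: one must verify that the conditional-expectation operator $f \mapsto \tilde f$ preserves monotonicity so that the inductive hypothesis remains applicable on the smaller ground set $Y$, and that the sign of the base-case expression $p(1-p)\bigl(f(\set{x}) - f(\emptyset)\bigr)\bigl(g(\set{x}) - g(\emptyset)\bigr)$ is tracked correctly in each monotonicity regime (to ensure the FKG inequality points one way and the two-decreasing variant used for Harris points the other). Neither is hard, but both are easy to botch, so the proof is really an exercise in setting up the induction and the sign bookkeeping cleanly rather than overcoming a substantive barrier.
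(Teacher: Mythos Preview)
The paper does not prove this statement; it is quoted as a standard tool with a citation to Alon--Spencer, so there is no proof in the paper to compare against. Your argument is correct and is essentially the standard proof one finds in that reference: induction on $|X|$ using the one-variable identity $\E[fg] - \E[f]\E[g] = p(1-p)(f(\set{x}) - f(\emptyset))(g(\set{x}) - g(\emptyset))$, with the conditional-expectation reduction $f \mapsto \tilde f$ to pass to the smaller ground set, and then the deduction of Harris by specializing to indicators of decreasing families and inducting on $n$.
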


\section{Proof of Theorem~\ref{theo:independent}}\label{sec:indep}

We will split this section into two subsections, containing the proofs of \ref{item:upper-bound} and \ref{item:lower-bound}, respectively.

\subsection{Upper Bound}

Let us define the following parameters for $\gamma \defeq \eps/(2k^2)$:
\[N \defeq \frac{n}{1-\gamma}, \quad p \defeq \frac{\Delta}{(1+\gamma)N^{k-1}}, \quad s \defeq \left(\left(\frac{k+\eps}{k-1}\right)\frac{\log \Delta}{\Delta}\right)^{1/(k-1)}n.\]
Consider the Erd\H{o}s--R\'enyi $k$-partite hypergraph $H \sim \mathcal{H}(k,N,p)$.
We will show that with high probability $H$ contains an $n$-balanced subhypergraph of maximum degree at most $\Delta$ with no balanced independent set containing $s$ vertices in each partition.
Let $V_1, \ldots, V_k$ denote the partitions of $H$, and let us bound the probability that such a balanced independent set exists in $H$.

\begin{Lemma}\label{lemma:no_edges_many}
    $\P\left[\text{there is a balanced independent set $I$ in $H$ of size $sk$}\right] \leq \exp\left(-\frac{\eps\,s\log\Delta}{10k}\right)$.
\end{Lemma}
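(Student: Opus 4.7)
The plan is a straightforward first-moment calculation. By the union bound,
\[\P[\exists \text{ balanced independent set of size } sk \text{ in } H] \;\leq\; \binom{N}{s}^{k}(1-p)^{s^{k}},\]
because a balanced set with exactly $s$ vertices in each partition spans $s^{k}$ potential edges of $H$, each present independently with probability $p$. Using $\binom{N}{s} \leq (eN/s)^{s}$ and $1-p \leq e^{-p}$, the right-hand side is at most $\exp\!\bigl(s(k\log(eN/s) - ps^{k-1})\bigr)$, so it suffices to show
\[ps^{k-1} - k\log(eN/s) \;\geq\; \frac{\epsilon\log\Delta}{10k}.\]

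For the main term, the definitions give the clean identity
\[ps^{k-1} \;=\; \frac{\Delta}{(1+\gamma)N^{k-1}} \cdot \frac{k+\epsilon}{k-1} \cdot \frac{\log\Delta}{\Delta} \cdot n^{k-1} \;=\; \frac{(1-\gamma)^{k-1}}{1+\gamma} \cdot \frac{k+\epsilon}{k-1}\log\Delta,\]
and with $\gamma = \epsilon/(2k^{2})$ the prefactor is $\geq 1 - k\gamma - O(\gamma^{2}) = 1 - \epsilon/(2k) - O(\epsilon^{2}/k^{4})$, yielding $ps^{k-1} \geq \frac{k + \epsilon/2 - o_{\epsilon}(1)}{k-1}\log\Delta$. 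For the subtracted term, since $s/N = \Theta_k\bigl((\log\Delta/\Delta)^{1/(k-1)}\bigr)$, a direct computation gives
\[\log(eN/s) \;=\; \tfrac{1}{k-1}\log\Delta - \tfrac{1}{k-1}\log\log\Delta + O_k(1),\]
so $k\log(eN/s) \leq \frac{k}{k-1}\log\Delta$ once $\Delta$ is sufficiently large. Subtracting produces a gap of at least $\frac{\epsilon/2 - o(1)}{k-1}\log\Delta$, which exceeds $\epsilon\log\Delta/(10k)$ with room to spare.

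There is no real obstacle beyond careful bookkeeping: the argument is a vanilla union bound. The only subtlety is confirming that the two small parameters introduced in the definitions of $N$ and $p$ — both controlled by $\gamma = \epsilon/(2k^{2})$ — create enough slack to absorb the $O(\epsilon^{2})$ error from expanding $(1-\gamma)^{k-1}/(1+\gamma)$ as well as the $\log\log\Delta$ and $O_k(1)$ corrections hidden inside $\log(eN/s)$. This is precisely why the lemma targets the (comfortably small) constant $1/(10k)$ rather than the truly extracted $1/(2(k-1))$.
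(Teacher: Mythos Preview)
Your proposal is correct and follows essentially the same approach as the paper: a first-moment union bound over the $\binom{N}{s}^{k}$ balanced $s$-sets, the estimates $\binom{N}{s}\leq(eN/s)^{s}$ and $1-p\leq e^{-p}$, the exact evaluation of $ps^{k-1}$ via the identity $n=(1-\gamma)N$, and the bound $\log(eN/s)\leq\tfrac{1}{k-1}\log\Delta$ for $\Delta$ large. The only cosmetic difference is that the paper tracks the prefactor explicitly as $(1-\gamma)^{k-1}/(1+\gamma)\geq 1-k\gamma$ rather than via your $1-k\gamma-O(\gamma^{2})$ expansion, but the resulting gap and the final comparison to $\eps\log\Delta/(10k)$ are identical.
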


\begin{proof}
    Let $A_i \subseteq V_i$ be arbitrary sets of size $s$.
    The set $A\defeq \bigcup_{i \in [k]}A_i$ forms a balanced independent set if and only if there is no edge in the subhypergraph induced by $A$.
    In particular, we have
    \[\P\left[A \text{ is a balanced independent set}\right] = (1- p)^{s^k}.\]
    From here, we may conclude by a union bound:
    \begin{align*}
        \P\left[\text{there is a balanced independent set $I$ in $H$ of size $sk$}\right] &\leq \binom{N}{s}^k(1- p)^{s^k} \\
        &\leq \left(\frac{eN}{s}\right)^{sk}\exp\left(-ps^k\right) \\
        &= \exp\left(-s\left(ps^{k-1} - k\left(1 + \log\frac{N}{s}\right)\right)\right).
    \end{align*}
    Let us consider the exponent above.
    We have
    \begin{align*}
        ps^{k-1} &= \frac{\Delta}{(1+\gamma)N^{k-1}}\,\left(\left(\frac{k+\eps}{k-1}\right)\frac{\log \Delta}{\Delta}\right)\,n^{k-1} \\
        &= \left(\frac{k+\eps}{k-1}\right)\left(\frac{(1-\gamma)^{k-1}}{1+\gamma}\right)\log \Delta \\
        &\geq \left(\frac{k+\eps}{k-1}\right)(1 - k\gamma)\log\Delta.
    \end{align*}
    Furthermore,
    \begin{align*}
        \log\frac{N}{s} &= \log\left(\left(\left(\frac{k-1}{k+\eps}\right)\frac{\Delta}{\log \Delta}\right)^{1/(k-1)}\,\frac{1}{1-\gamma}\right) \\
        &\leq \frac{1}{k-1}\,\log\Delta + \log \left(\frac{1}{1-\gamma}\right).
    \end{align*}
    From these inequalities, we can conclude:
    \begin{align*}
        ps^{k-1} - k\left(1 + \log\frac{N}{s}\right) &\geq \left(\frac{\eps - k\gamma(k+\eps)}{k-1}\right)\log\Delta - k\left(1 + \log \left(\frac{1}{1-\gamma}\right)\right) \\
        &\geq \left(\frac{\eps}{10k}\right)\log \Delta,
    \end{align*}
    for $\Delta$ large enough and $\eps$ small enough.
    Plugging this value in above completes the proof.
\end{proof}

Next, let us show that very few vertices have large degree in $H$.

\begin{Lemma}\label{lemma:small_degree}
    For each $i \in [k]$, let $B_i \subseteq V_i$ consist of vertices having degree at least $\Delta$.
    Then, we have
    \[\P\left[\exists i \in [k],\, |B_i| \geq \gamma N\right] \leq k\,\exp\left(-\frac{\gamma^3N\Delta}{10(1+\gamma)}\right).\]
\end{Lemma}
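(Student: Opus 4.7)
The plan is a two-level concentration argument: first bound the probability that any particular vertex has large degree, then bound the probability that more than $\gamma N$ vertices simultaneously do. The key structural observation that makes this clean is that in $\mathcal{H}(k,N,p)$, every valid edge contains exactly one vertex of $V_i$, so for distinct $u,v \in V_i$ the sets of potential edges through $u$ and through $v$ are disjoint. Consequently the degrees $\{\deg_H(v) : v \in V_i\}$ are mutually independent random variables, each distributed as $\mathrm{Bin}(N^{k-1}, p)$ with mean $pN^{k-1} = \Delta/(1+\gamma)$.

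First I would apply the Chernoff bound (Theorem~\ref{theo:chernoff}) to a single vertex $v \in V_i$. With $\xi = \gamma\Delta/(1+\gamma) \leq \E[\deg_H(v)]$, we get
\[
\P\bigl[\deg_H(v) \geq \Delta\bigr] \;\leq\; 2\exp\!\left(-\frac{\gamma^2\Delta}{3(1+\gamma)}\right) \;=:\; q.
\]
Because the vertex degrees in $V_i$ are independent, the random variable $|B_i|$ is stochastically dominated by $\mathrm{Bin}(N,q)$. Since $q$ is exponentially small in $\Delta$ while we are asking for $|B_i| \geq \gamma N$, we are in the deep large-deviation regime; rather than using Theorem~\ref{theo:chernoff} (which requires the threshold to lie within the mean), I would directly union-bound over the choice of at least $\gamma N$ vertices:
\[
\P\bigl[|B_i| \geq \gamma N\bigr] \;\leq\; \binom{N}{\lceil \gamma N\rceil} q^{\lceil \gamma N\rceil} \;\leq\; \left(\frac{eq}{\gamma}\right)^{\gamma N}.
\]

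Taking logarithms, the exponent on the right becomes
\[
\gamma N\left(1 + \log(1/\gamma) - \frac{\gamma^2\Delta}{3(1+\gamma)}\right),
\]
and for $\Delta$ sufficiently large the dominant term $-\gamma^3 N\Delta / (3(1+\gamma))$ absorbs the lower-order logarithmic corrections, so the quantity is bounded above by $-\gamma^3 N\Delta/(10(1+\gamma))$. Finally, a union bound over the $k$ partitions yields the factor of $k$ in the stated bound. The only step that requires any care is the last numerical comparison absorbing the $1 + \log(1/\gamma)$ into the constant $1/10$; this is routine given that $\Delta$ is taken large compared to the fixed parameter $\gamma = \eps/(2k^2)$, so I do not anticipate a real obstacle.
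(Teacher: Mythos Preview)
Your proposal is correct and essentially identical to the paper's proof: both apply Chernoff to a single vertex degree, use the independence of degrees within a part $V_i$ (from the $k$-partite structure) to multiply over a set of size $\gamma N$, union-bound over such sets via $\binom{N}{\gamma N}(2e/\gamma)^{\gamma N}$, and absorb the $\log(1/\gamma)$ term for large $\Delta$. The only cosmetic difference is that you phrase the independence step as stochastic domination by $\mathrm{Bin}(N,q)$ before unwinding the same union bound; note that your displayed exponent drops a harmless $\log 2$ coming from the factor of $2$ in $q$, but this is absorbed along with the other lower-order terms exactly as you say.
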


\begin{proof}
    Let $v\in V_i$ be arbitrary.
    We will concentrate $\deg_H(v)$ using \hyperref[theo:chernoff]{Chernoff's inequality}.
    In particular, we have
    \[\P\left[|\deg_H(v) - \E[\deg_H(v)]|\geq \gamma\E[\deg_H(v)]\right] \leq 2\exp\left(-\frac{\gamma^2\E[\deg_H(v)]}{3}\right).\]
    Note that $\E[\deg_H(v)] = N^{k-1}p$.
    Furthermore, for $u, v \in V_i$, the random variables $\deg_H(v)$ and $\deg_H(u)$ are independent as $H$ is $k$-partite.
    Let $A \subseteq V_i$ be a set of size $\gamma \,N$.
    It follows that:
    \[\P\left[\forall v \in A,\, \deg_H(v) \geq \Delta\right] \,=\, \P\left[\forall v \in A,\, \deg_H(v) \geq (1+\gamma)N^{k-1}p\right] \,\leq\, \left(2\exp\left(-\frac{\gamma^2N^{k-1}p}{3}\right)\right)^{\gamma N}.\]
    In particular, we have
    \begin{align*}
        \P[|B_i| \geq \gamma N] &\leq \binom{N}{\gamma N}\left(2\exp\left(-\frac{\gamma^2N^{k-1}p}{3}\right)\right)^{\gamma N} \\
        &\leq \left(\frac{2e}{\gamma}\exp\left(-\frac{\gamma^2N^{k-1}p}{3}\right)\right)^{\gamma N} \\
        &= \exp\left(-\gamma\,N\left(\frac{\gamma^2N^{k-1}p}{3} - \log\frac{2e}{\gamma}\right)\right) \\
        &= \exp\left(-\gamma\,N\left(\frac{\gamma^2\Delta}{3(1+\gamma)} - \log\frac{2e}{\gamma}\right)\right) \\
        &\leq \exp\left(-\frac{\gamma^3N\Delta}{10(1+\gamma)}\right),
    \end{align*}
    for $\Delta$ large enough.
    The claim now follows by a union bound over $[k]$.
\end{proof}

The complements of the events in Lemmas~\ref{lemma:no_edges_many} and \ref{lemma:small_degree} occur with probability at least 
\[1 - \exp\left(-\frac{\eps\,s\log\Delta}{10k}\right) - k\,\exp\left(-\frac{\gamma^3N\Delta}{10(1+\gamma)}\right) > 0,\]
for $\Delta$ large enough.
Consider such an output $H$.
Note that a balanced independent set in any induced subhypergraph $H'$ of $H$ will be a balanced independent set in $H$ as well.
Consider the hypergraph $H'$ obtained from $H$ by removing the $\gamma\,N$ highest degree vertices from each $V_i$.
By Lemma~\ref{lemma:no_edges_many}, $H'$ does not contain a balanced independent set of size $s$.
Furthermore, by Lemma~\ref{lemma:small_degree}, $\Delta(H') \leq \Delta$.
Finally, note that $|V_i| - \gamma\,N = n$, completing the proof of Theorem~\ref{theo:independent}\ref{item:upper-bound}.

\subsection{Lower Bound}

For this section, we fix the following parameters:
\[p \defeq \left(\left(\frac{1 - \eps/4}{k-1}\right)\frac{\log D}{D}\right)^{1/(k-1)}, \quad \delta \defeq D^{-\left(\frac{1 - \eps/8}{k-1}\right)}.\]
Let $H = (V_1 \cup \cdots \cup V_k, E)$ be an $n$-balanced $k$-uniform $k$-partite hypergraph such that $|E| = Dn$.
Throughout the proof, we will assume $D$ is sufficiently large and $\eps$ is sufficiently small for all computations to hold.

We will prove Theorem~\ref{theo:independent} by analyzing the following procedure:
\begin{enumerate}[label=\ep{\textbf{Ind\arabic*}}]
    \item\label{item:color} For each $v \in V(H) \setminus V_k$, include $v \in I$ independently with probability $p $.
    \item For each $v \in V_k$, include $v \in I$ if and only if $\forall e \in E_H(v)$, $e - v \not\subseteq I$, where $e-v = e \setminus\set{v}$.
    \item Define $I' \subseteq I$ by removing vertices such that $|I'\cap V_i| = \min_{j \in [k]}|I \cap V_j|$ for each $i \in [k]$.
\end{enumerate}
Clearly the resulting set $I'$ is a balanced independent set.
We will show that there is an outcome $I$ such that
\[|I \cap V_i| \geq \left(\left(\frac{1-\eps}{k-1}\right)\frac{\log D}{D}\right)^{1/(k-1)}n,\]
for each $i \in [k]$.
First, let us show that $|I \cap V_j|$ is well concentrated for $j < k$.

\begin{Lemma}\label{lemma:Ij}
    $\P\left[\exists\, j \in [k-1],\, |I \cap V_j| \leq \left(\left(\frac{1-\eps}{k-1}\right)\frac{\log D}{D}\right)^{1/(k-1)}n\right] \leq 2k\exp\left(-\frac{\eps^2np}{300k^2}\right)$.
\end{Lemma}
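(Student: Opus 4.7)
The plan is a direct Chernoff bound followed by a union bound. I would first observe that for $j \in [k-1]$, step~\ref{item:color} of the procedure includes each $v \in V_j$ in $I$ independently with probability $p$, so $|I \cap V_j| \sim \mathrm{Bin}(n, p)$ with mean $np$. By the definition of $p$, the threshold in the lemma equals $\alpha \cdot np$, where
\[\alpha \,\defeq\, \left(\frac{1-\eps}{1-\eps/4}\right)^{1/(k-1)},\]
so the event in question is the union over $j \in [k-1]$ of $\{|I \cap V_j| \leq \alpha \cdot np\}$.

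The main calculation is to show that $1 - \alpha \geq \eps/(2k)$ for all $k \geq 2$ and sufficiently small $\eps$. Writing $\frac{1-\eps}{1-\eps/4} = 1 - \frac{3\eps/4}{1-\eps/4}$ and invoking Bernoulli's inequality in its concave form $(1-x)^r \leq 1 - rx$ (valid for $r \in [0,1]$, $x \in [0,1]$) with $r = 1/(k-1)$, I would obtain
\[\alpha \,\leq\, 1 - \frac{1}{k-1}\cdot\frac{3\eps/4}{1-\eps/4},\]
from which the desired inequality follows after a brief simplification; the sharpest case is large $k$, and for $k = 2$ the bound $1 - \alpha \approx 3\eps/4$ is already far stronger. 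This is the only substantive step, and it is routine.

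With the estimate on $1 - \alpha$ in hand, I would invoke Theorem~\ref{theo:chernoff} on $|I \cap V_j|$ with deviation $\xi = (1-\alpha)np$, obtaining
\[\P[|I \cap V_j| \leq \alpha \cdot np] \,\leq\, 2\exp\!\left(-\frac{(1-\alpha)^2 np}{3}\right) \,\leq\, 2\exp\!\left(-\frac{\eps^2 np}{12 k^2}\right),\]
and conclude via a union bound over the $k-1$ values of $j$, giving at most $2(k-1)\exp(-\eps^2 np/(12 k^2))$. This comfortably implies the claimed bound $2k\exp(-\eps^2 np/(300 k^2))$. I anticipate no genuine obstacle: the factor $300$ in the denominator is clearly loose, signaling that precise bookkeeping of the constant in $1 - \alpha$ is unnecessary.
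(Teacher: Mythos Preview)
Your proposal is correct and follows essentially the same approach as the paper: both recognize that $|I\cap V_j|\sim\mathrm{Bin}(n,p)$, rewrite the threshold as $\alpha\cdot np$ with $\alpha=\bigl(\tfrac{1-\eps}{1-\eps/4}\bigr)^{1/(k-1)}$, bound $1-\alpha$ from below by a constant times $\eps/k$ via a Bernoulli/concavity estimate, apply the Chernoff bound, and finish with a union bound. The paper inserts the intermediate step $\tfrac{1-\eps}{1-\eps/4}\le 1-\eps/2$ before taking the $(k-1)$-th root and arrives at the looser constant $\eps/(10k)$ instead of your $\eps/(2k)$, which is why $300k^2$ appears rather than your $12k^2$; otherwise the arguments are identical.
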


\begin{proof}
    This follows from \hyperref[theo:chernoff]{Chernoff's inequality} and a union bound over $[k]$.
    In particular, we have the following:
    \begin{align*}
        \P\left[|I \cap V_j| \leq \left(\left(\frac{1-\eps}{k-1}\right)\frac{\log D}{D}\right)^{1/(k-1)}n\right] &= \P\left[|I \cap V_j| \leq \left(\frac{1-\eps}{1 - \eps/4}\right)^{1/(k-1)}\E[|I \cap V_j|]\right] \\
        &\leq \P\left[|I \cap V_j| \leq \left(1 - \eps/2\right)^{1/(k-1)}\E[|I \cap V_j|]\right] \\
        &\leq \P\left[|I \cap V_j| \leq \left(1 - \eps/(10k)\right)\E[|I \cap V_j|]\right] \\
        &\leq \P\left[||I \cap V_j| - \E[|I \cap V_j|]| \geq \eps/(10k)\,\E[|I \cap V_j|]\right] \\
        &\leq 2\exp\left(-\frac{\eps^2\E[|I \cap V_j|]}{300k^2}\right),
    \end{align*}
    as claimed.
\end{proof}

Now, let us consider $I\cap V_k$.

\begin{Lemma}\label{lemma:V_k_exp}
    $\E[|I \cap V_k|] \geq \delta\,n$.
\end{Lemma}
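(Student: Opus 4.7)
The plan is to compute $\P[v \in I]$ for each $v \in V_k$ using only the randomness of step \ref{item:color}. Fix $v \in V_k$ and list $E_H(v) = \set{e_1, \ldots, e_{d_v}}$ with $d_v \defeq \deg_H(v)$. By step \ref{item:color}, the set $I \cap (V_1 \cup \cdots \cup V_{k-1})$ is obtained by selecting each vertex of $V_1 \cup \cdots \cup V_{k-1}$ independently with probability $p$, and the inclusion $v \in I$ is exactly the intersection of the $d_v$ events
\[
\mathcal{A}_j \defeq \set{S \subseteq V_1 \cup \cdots \cup V_{k-1}\,:\, e_j - v \not\subseteq S},
\]
each of which is a decreasing family on this product probability space. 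Applying \hyperref[theo:harris]{Harris's inequality} therefore gives
\[
\P[v \in I] \;\geq\; \prod_{j=1}^{d_v}\P[e_j - v \not\subseteq I] \;=\; (1-p^{k-1})^{d_v},
\]
since $|e_j - v| = k-1$ and each of its vertices lies in $I$ independently with probability $p$.

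Next, by linearity of expectation, $\E[|I \cap V_k|] \geq \sum_{v \in V_k}(1-p^{k-1})^{\deg_H(v)}$. Because $H$ is $k$-partite, every edge has exactly one endpoint in $V_k$, so $\sum_{v\in V_k}\deg_H(v) = |E| = Dn$. The map $x \mapsto (1-p^{k-1})^x$ is convex (as $a^x$ is convex for $a \in (0,1)$), so Jensen's inequality applied to the degree sequence of $V_k$ yields
\[
\E[|I \cap V_k|] \;\geq\; n\,(1-p^{k-1})^D.
\]

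The remainder is a direct asymptotic estimate. Using $\log(1-y) \geq -y - y^2$ for small $y > 0$ with $y = p^{k-1}$, together with $Dp^{k-1} = \frac{1-\eps/4}{k-1}\log D$ and $Dp^{2(k-1)} = \frac{(1-\eps/4)^2 (\log D)^2}{(k-1)^2 D} = o(1)$, we get $(1-p^{k-1})^D \geq D^{-(1-\eps/4)/(k-1)}(1-o(1))$. Since $(1-\eps/4)/(k-1) < (1-\eps/8)/(k-1)$, this comfortably exceeds $\delta = D^{-(1-\eps/8)/(k-1)}$ for all $D$ large enough.

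The only real subtlety lies in the first step: distinct edges $e_i, e_j$ will typically share vertices in $V_1 \cup \cdots \cup V_{k-1}$, so the events $\set{e_j - v \subseteq I}$ are positively correlated rather than independent. Harris's inequality is exactly what handles this, since it requires only that each $\mathcal{A}_j$ be a decreasing family on the same product space; the shared vertices only help the lower bound. Beyond this observation, the argument is a two-line expectation calculation followed by a straightforward estimate of $(1-p^{k-1})^D$.
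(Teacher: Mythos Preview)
Your proof is correct and follows essentially the same route as the paper: apply \hyperref[theo:harris]{Harris's inequality} to the decreasing events $\{e_j - v \not\subseteq I\}$ to get $\P[v\in I]\ge (1-p^{k-1})^{\deg_H(v)}$, use Jensen's inequality with $\sum_{v\in V_k}\deg_H(v)=Dn$ to obtain $\E[|I\cap V_k|]\ge n(1-p^{k-1})^{D}$, and finish with a direct estimate showing $(1-p^{k-1})^{D}\ge \delta$ for large $D$. The only cosmetic difference is the final step, where the paper uses a bound of the form $1-y\ge \exp(-y/(1-\eps/20))$ rather than your $\log(1-y)\ge -y-y^2$; both yield the needed inequality.
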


\begin{proof}
    Let us define the following random variables for each $v \in V_k$ and $e \in E_H(v)$:
    \[S_{v, e} \defeq \bbone\set{e-v \not\subseteq I}, \quad S_v \defeq \prod_{e \in E_{H}(v)}S_{v, e}.\]
    It follows that
    \[|I\cap V_k| = \sum_{v \in V_k}S_v.\]
    Note that
    \[\P[S_v = 1] = \P[\forall e \in E_H(v),\, S_{v, e} = 1], \quad \text{and} \quad  \P[S_{v, e} = 1] = 1 - p^{k-1}.\]
    In order to provide a lower bound on the former, we will apply \hyperref[theo:harris]{Harris's inequality}.
    For $v \in V(H) \setminus V_k$, define the event $I_v \defeq \set{v \in I}$.
    Let $\Gamma \defeq \set{I_v\,:\,v \in V(H) \setminus V_k}$, and let $S \subseteq \Gamma$ be the random events in $\Gamma$ that took place during step~\ref{item:color}.
    Note that $S$ is formed by including each event independently with probability $p$.
    Furthermore, $S$ determines the entire procedure, i.e., we can determine $I'$ from $S$.
    Consider the following family for $e \in E_H(v)$:
    \[\mathcal{A}_e \defeq \set{S' \subseteq \Gamma\,:\, \text{when } S=S' \text{ we have } S_{v, e} = 1}.\]
    Let $S_1 \in \mathcal{A}_e$, and $S_2 \subseteq S_1$.
    Then, $S_2 \in \mathcal{A}_e$ as well.
    In particular, $\mathcal{A}_e$ is a decreasing family of subsets of $\Gamma$.
    Hence, by \hyperref[theo:harris]{Harris's inequality}, we have
    \[\P[S_v = 1] \,\geq\, \prod_{e \in E_H(v)}\P[S_{v, e} = 1] \,=\, (1 - p^{k-1})^{\deg_H(v)},\]
    from where, we conclude by Jensen's inequality
    \[\E[|I\cap V_k|] \,=\, \sum_{v \in V_k}\P[S_v = 1] \,\geq\, \sum_{v \in V_k}(1 - p^{k-1})^{\deg_H(v)} \,\geq\, n(1 - p^{k-1})^D.\]
    Note the following:
    \begin{align*}
        (1 - p^{k-1})^D &\geq \exp\left(-\frac{p^{k-1}D}{(1 - \eps/20)}\right) \\
        &= \exp\left(-\frac{(1-\eps/4)\log D}{(k-1)(1 - \eps/20)}\right) \\
        &\geq D^{-\left(\frac{1 - \eps/8}{k-1}\right)},
    \end{align*}
    which completes the proof.
\end{proof}

It remains to bound the probability that $|I\cap V_k|$ is small.

\begin{Lemma}\label{lemma:V_k_conc_markov}
    $\P\left[|I\cap V_k| \leq \left(\left(\frac{1-\eps}{k-1}\right)\frac{\log D}{D}\right)^{1/(k-1)}n\right] \leq 1 - \delta/2$.
\end{Lemma}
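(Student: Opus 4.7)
The plan is to apply the \hyperref[theo:markov]{Reverse Markov inequality} to the random variable $X \defeq |I\cap V_k|$. This is the natural tool here because $X$ is trivially bounded above ($X \leq |V_k| = n$) and we already have a lower bound on its expectation from Lemma~\ref{lemma:V_k_exp}, namely $\E[X] \geq \delta\,n$.

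First, I would set $t \defeq n$ and $a \defeq \left(\left(\frac{1-\eps}{k-1}\right)\frac{\log D}{D}\right)^{1/(k-1)}n$ and invoke the second part of Theorem~\ref{theo:markov} to obtain
\[
\P[X \leq a] \;\leq\; \frac{t - \E[X]}{t - a} \;\leq\; \frac{n - \delta n}{n - a} \;=\; \frac{1 - \delta}{1 - a/n}.
\]
So it suffices to verify that $(1-\delta)/(1 - a/n) \leq 1 - \delta/2$, which rearranges to the inequality $\delta \geq (2-\delta)\,(a/n)$.

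The key step is to compare the two small quantities $\delta$ and $a/n$. Writing them in exponential form,
\[
\delta \;=\; D^{-(1-\eps/8)/(k-1)}, \qquad \frac{a}{n} \;=\; \left(\frac{1-\eps}{k-1}\right)^{1/(k-1)} D^{-1/(k-1)}(\log D)^{1/(k-1)},
\]
so that $\delta / (a/n)$ grows like $D^{(\eps/8)/(k-1)}/(\log D)^{1/(k-1)}$, which tends to infinity as $D \to \infty$. In particular, for $D$ sufficiently large (depending on $\eps$ and $k$) we have $\delta \geq 2\,(a/n)$, which is more than enough to conclude the required inequality $\delta \geq (2-\delta)(a/n)$.

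I do not expect any real obstacle here: the computation is routine once one notices that the tail probability is controlled not by a concentration argument on $|I\cap V_k|$ (which would be delicate, since the events $\{v \in I\}$ for $v \in V_k$ are negatively correlated and depend in a complicated way on the first round) but rather by the simple-minded Reverse Markov bound, exploiting the hard upper bound $X \leq n$ together with Lemma~\ref{lemma:V_k_exp}. This is precisely the step that, as the introduction emphasizes, allows for the improved dependence of $n$ on $D$ in Theorem~\ref{theo:independent}\ref{item:lower-bound}.
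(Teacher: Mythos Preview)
Your proof is correct and follows the same approach as the paper: apply the \hyperref[theo:markov]{Reverse Markov inequality} to $X=|I\cap V_k|$ using the deterministic bound $X\le n$ together with Lemma~\ref{lemma:V_k_exp}, and then compare $\delta$ with $a/n$ to finish. The paper organizes the arithmetic slightly differently (it first bounds $\P[X\le \E[X]/2]$ and then checks $a/n\le \delta/2\le \E[X]/(2n)$), but the content is identical.
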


\begin{proof}
    Note that $|I\cap V_k| \leq n$.
    From Lemma~\ref{lemma:V_k_exp} and the \hyperref[theo:markov]{Reverse Markov inequality}, we have
    \begin{align*}
        \P[|I\cap V_k| \leq \E[|I\cap V_k|/2]] &\leq \frac{n - \E[|I\cap V_k|]}{n - \E[|I\cap V_k|]/2} \\
        &= 1 - \frac{\E[|I\cap V_k|]/2}{n - \E[|I\cap V_k|]/2} \\
        &\leq 1 - \delta/2.
    \end{align*}
    Note the following:
    \begin{align*}
        \left(\left(\frac{1-\eps}{k-1}\right)\frac{\log D}{D}\right)^{1/(k-1)} \,\leq\, D^{-\left(\frac{1 - \eps/10}{k-1}\right)} \,\leq\, \delta/2,
    \end{align*}
    for $D$ large enough.
    In particular, we have
    \begin{align*}
        \P\left[|I\cap V_k| \leq \left(\left(\frac{1-\eps}{k-1}\right)\frac{\log D}{D}\right)^{1/(k-1)}n\right] &\leq \P\left[|I\cap V_k| \leq \delta n/2\right] \\
        &\leq \P[|I\cap V_k| \leq \E[|I\cap V_k|]/2] \\
        &\leq 1 - \delta/2,
    \end{align*}
    as desired.
\end{proof}

From Lemmas~\ref{lemma:Ij}, \ref{lemma:V_k_conc_markov}, and for $C$ large enough in the statement of Theorem~\ref{theo:independent}\ref{item:lower-bound}, we have
\begin{align*}
    \P\left[|I'| \leq \left(\left(\frac{1-\eps}{k-1}\right)\frac{\log D}{D}\right)^{1/(k-1)}n\right] &= \P\left[\exists j \in [k]\,,\,|I\cap V_j| \leq \left(\left(\frac{1-\eps}{k-1}\right)\frac{\log D}{D}\right)^{1/(k-1)}n\right] \\
    &\leq 2k\,\exp\left(-\frac{\eps^2np}{300k^2}\right) + 1 - \delta/2 \\
    &\leq 1 - \delta/4 \,<\, 1,
\end{align*}
for $n,\,D$ large enough.
Therefore, there exists an outcome where the complements of the events in Lemmas~\ref{lemma:Ij} and \ref{lemma:V_k_conc_markov} occur, completing the proof of Theorem~\ref{theo:independent}\ref{item:lower-bound}.

\section{Proof of Theorem~\ref{theo:color}}\label{sec:color}

For this section, we will fix the following parameters for $\gamma \defeq \eps/(2k^2)$:
\[q \defeq (1+\gamma/2)\left(\left(k - 1\right)\frac{\Delta}{\log \Delta}\right)^{1/(k-1)}, \quad \delta \defeq \exp\left(-\Delta^{\gamma/50}\right), \quad \omega \defeq\frac{1}{\Delta\,(\log\Delta)^{1/(2(k-1))}}.\]
To prove Theorem~\ref{theo:color}, we will describe a two stage coloring procedure.
We will first find a proper partial balanced coloring $\phi$ using $q$ colors.
We will show that the uncolored vertices induce a hypergraph of ``small'' maximum degree, from where we can complete the coloring by applying Lemma~\ref{lemma:rDelta}.
Before we describe the procedure, we make a few definitions regarding partial colorings $\phi\,:\,V(H) \to [q]$:
\begin{align*}
    \forall S \subseteq V(H),\, \phi(S) &\defeq \set{\phi(u)\,:\, u \in S}, \\
    \forall v \in V(H),\, L_\phi(v) &\defeq \set{c \in [q]\,:\, \forall e \in E_H(v),\, \phi(e-v) \neq \set{c}}.
\end{align*}
In particular, $L_\phi(v)$ contains colors which may be assigned to $v$ while ensuring the coloring remains proper.

We will construct $\phi$ through the following steps:
\begin{enumerate}[label=\ep{\textbf{Col\arabic*}}]
    \item\label{step:color} For each $v \in V(H) \setminus V_k$, independently assign $\phi(v) \in [q]$ uniformly at random.
    
    \item\label{step:proper}
    For each $v \in V_k$, if $L_\phi(v) \neq \0$, assign $\phi(v) \in L_\phi(v)$ uniformly at random.
    
    \item\label{step:uncolor_k} For each $c \in [q]$, let $V_i(c)$ be the set of vertices $v \in V_i$ such that $\phi(v) = c$, and let $n_c \in \N$ be such that $n_c \leq \min_{i \in [k]}|V_i(c)|$.
    Uncolor $|V_k(c)| - n_c$ vertices in $V_k(c)$ to obtain $V_k'(c)$.

    \item\label{step:uncolor_j} For each $c \in [q]$ and $1 \leq i < k$, uncolor $|V_i(c)| - |V_{k}'(c)|$ vertices in $V_i(c)$ to obtain $V_i'(c)$.
    
\end{enumerate}
A few remarks are required.
We note that step~\ref{step:proper} ensures $\phi$ is proper, and steps~\ref{step:uncolor_k}, \ref{step:uncolor_j} ensure that $\phi$ is balanced.
Under certain conditions and for an appropriate $n_c$, the choice of vertices to be removed during step~\ref{step:uncolor_k} can be arbitrary, however, we will describe later how to carefully choose vertices to uncolor during step~\ref{step:uncolor_j} such that the hypergraph $H_\phi$ induced by the uncolored vertices has small maximum degree and satisfies the conditions of Lemma~\ref{lemma:rDelta}.

Let us first consider step~\ref{step:color}.

\begin{Lemma}\label{lemma:color_k-1}
    $\P\left[\exists i \in [k-1], c \in [q], \, |V_i(c) - n/q| \geq 2\omega\,n/q\right] \leq 2kq\exp\left(-\frac{4\,\omega^2\,n}{3q}\right)$.
\end{Lemma}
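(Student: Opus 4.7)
The plan is straightforward: apply \hyperref[theo:chernoff]{Chernoff's inequality} pointwise and then take a union bound.

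Fix $i \in [k-1]$ and $c \in [q]$. By step~\ref{step:color}, each vertex of $V_i$ is assigned color $c$ independently with probability $1/q$, so $|V_i(c)|$ is a binomial random variable on $n$ trials with success probability $1/q$. In particular $\E[|V_i(c)|] = n/q$. I would apply Theorem~\ref{theo:chernoff} with $\xi = 2\omega\,n/q$; this requires checking $\xi \leq \E[|V_i(c)|]$, i.e., $2\omega \leq 1$, which holds for $\Delta$ large since $\omega = 1/(\Delta(\log\Delta)^{1/(2(k-1))})$. Chernoff then yields
\[
\P\!\left[\bigl||V_i(c)| - n/q\bigr| \geq 2\omega\,n/q\right] \,\leq\, 2\exp\!\left(-\frac{(2\omega\,n/q)^2}{3\,(n/q)}\right) \,=\, 2\exp\!\left(-\frac{4\omega^2\,n}{3q}\right).
\]

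To finish, I would take a union bound over the $(k-1)q \leq kq$ pairs $(i, c)$ with $i \in [k-1]$ and $c \in [q]$, which gives the claimed bound of $2kq\exp(-4\omega^2 n/(3q))$.

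There is no genuine obstacle here; the only point worth verifying is that the hypothesis $\xi \leq \E[X]$ of Chernoff's inequality is satisfied, which follows from the choice of $\omega$ and $\Delta$ being sufficiently large. Note also that the random variables $|V_i(c)|$ for different $i$ live on disjoint sets of vertices (and hence disjoint trials), but independence across $(i,c)$ is not needed for a union bound.
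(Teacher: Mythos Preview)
Your proof is correct and follows exactly the approach indicated in the paper, which simply states that the result follows from \hyperref[theo:chernoff]{Chernoff's inequality} and a union bound over $[k-1]$ and $[q]$. Your write-up fills in precisely those details, including the verification that $\xi \leq \E[X]$.
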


\begin{proof}
    The proof follows by \hyperref[theo:chernoff]{Chernoff's inequality} and a union bound over $[k-1], \, [q]$.
\end{proof}

Let $U_k \subseteq V_k$ consist of the vertices $v$ such that $L_\phi(v) = \0$.
The next lemma will bound $\E[|U_k|]$.

\begin{Lemma}\label{lemma:exp_uncolor}
    $\E[|U_k|] \leq \delta\,n$.
\end{Lemma}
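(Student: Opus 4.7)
The goal is to show $\E[|U_k|] \leq \delta n$, which I would establish via the pointwise bound $\P[v \in U_k] \leq \delta$ for every $v \in V_k$, since $\E[|U_k|] = \sum_{v \in V_k}\P[v \in U_k]$. Fix such a $v$ and let $d \defeq \deg_H(v) \leq \Delta$. For each color $c \in [q]$, let $B_c$ denote the event that color $c$ is \emph{blocked} at $v$, meaning some $e \in E_H(v)$ satisfies $\phi(e-v) = \set{c}$. Then $v \in U_k$ iff $\bigcap_{c \in [q]} B_c$ occurs.

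First I would apply \hyperref[theo:harris]{Harris's inequality} one color at a time. Fix $c \in [q]$ and note that the variables $\set{\bbone[\phi(u) = c]}_{u \in N_H(v) \setminus V_k}$ are independent $\mathrm{Bernoulli}(1/q)$'s and that $\bar B_c = \bigcap_{e \ni v} \set{\phi(e-v) \neq \set{c}}$ is a decreasing function of them. Harris then yields
\[
\P[\bar B_c] \;\geq\; \prod_{e \in E_H(v)} \P[\phi(e-v) \neq \set{c}] \;=\; (1 - q^{-(k-1)})^d \;\geq\; (1 - q^{-(k-1)})^\Delta.
\]

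The second and most delicate step is to combine these per-color estimates into a joint bound on $\P\bigl[\bigcap_c B_c\bigr]$. Setting $S_c \defeq \phi^{-1}(c) \cap (N_H(v) \setminus V_k)$ gives a labelled multinomial partition of $N_H(v) \setminus V_k$ into $q$ blocks, and the family $(S_c)_{c \in [q]}$ is negatively associated by the classical NA property of multinomial samples. Since each $B_c$ depends only on $S_c$ and is monotone increasing in $S_c$ (adding a vertex to $S_c$ can only create new edges through $v$ that are monochromatic in color $c$), the indicators $\set{\bbone_{B_c}}_c$ are themselves negatively associated, whence
\[
\P[v \in U_k] \;=\; \P\!\left[\bigcap_{c \in [q]} B_c\right] \;\leq\; \prod_{c\in [q]} \P[B_c] \;\leq\; \bigl(1 - (1 - q^{-(k-1)})^\Delta\bigr)^q.
\]
Finally I would run a routine estimate: writing $\alpha \defeq (1 - q^{-(k-1)})^\Delta$ and using $-\log(1-x) \leq x/(1-x)$ together with $q^{k-1} = (1+\gamma/2)^{k-1}(k-1)\Delta/\log\Delta$ gives $\alpha \geq \Delta^{-\beta(1+o(1))}$ with $\beta \defeq ((k-1)(1+\gamma/2)^{k-1})^{-1}$. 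A short Taylor expansion, using $\gamma = \eps/(2k^2)$, shows $(1/(k-1))(1 - (1+\gamma/2)^{-(k-1)}) \geq \gamma/4$, so $q\alpha \geq \Delta^{\gamma/5}$ for $\Delta$ large, and hence $\P[v \in U_k] \leq \exp(-q\alpha) \leq \exp(-\Delta^{\gamma/5}) \leq \delta$.

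The main obstacle is the middle step. Harris as stated in the paper applies only to monotone events in \emph{independent} Bernoullis, whereas the indicators $\set{\bbone[\phi(u) = c]}_c$ at a fixed $u$ form a one-hot vector and are therefore \emph{not} mutually independent across colors. I would resolve this by invoking the classical negative association of the multinomial partition $(S_c)_c$; an equivalent alternative is to observe that $|L_\phi(v)| = \sum_c \bbone_{\bar B_c}$ is a sum of NA Bernoullis with mean at least $q\alpha \geq \Delta^{\gamma/5}$ and then apply a Chernoff-type bound (valid under NA) to get $\P[|L_\phi(v)| = 0] \leq \exp(-\E|L_\phi(v)|/8) \leq \delta$, yielding the same conclusion.
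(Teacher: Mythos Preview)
Your proof is correct and its overall architecture matches the paper's: both establish the per-color lower bound $\P[c\in L_\phi(v)]\geq(1-q^{-(k-1)})^\Delta$ via Harris's inequality applied to the independent Bernoullis $\bbone[\phi(u)=c]$, then combine across colors via a negative-correlation argument, and finish with the same asymptotic calculation.

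The genuine difference lies in the middle step. You invoke the classical negative association of the multinomial allocation: for each $u$ the one-hot vector $(\bbone[\phi(u)=c])_{c\in[q]}$ is NA, independent unions of NA families are NA, and each $B_c$ is an increasing function of the disjoint block $(\bbone[\phi(u)=c])_u$, so the indicators $(\bbone_{B_c})_c$ are NA and $\P[\bigcap_c B_c]\leq\prod_c\P[B_c]$ follows. The paper instead stays entirely within the FKG framework of \S\ref{sec:prelim}: it proves the same product inequality by induction on $|I|$, peeling off one color $c'$ by realizing $\phi$ as first sampling $S=\phi^{-1}(c')\cap N_H(v)$ (independent $\mathrm{Bernoulli}(1/q)$'s) and then coloring $N_H(v)\setminus S$ from $[q]\setminus\{c'\}$; the function $f(S)=\bbone_{B_{c'}}$ is increasing while $g(S)=\P[\bigcap_{c\in I}B_c\mid S]$ is decreasing, so FKG gives $\E[fg]\leq\E[f]\E[g]$. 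Your route is shorter and more transparent, at the cost of importing the NA machinery (Joag-Dev--Proschan / Dubhashi--Ranjan) which the paper has not set up; the paper's route is more self-contained but requires the inductive FKG trick. Both yield exactly the same bound $\P[v\in U_k]\leq\bigl(1-(1-q^{-(k-1)})^\Delta\bigr)^q$, and your final numerics are equivalent to the paper's.
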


\begin{proof}
    For each $v \in V_k$ and $c \in [q]$, define the following events:
    \[S_{v, c} \defeq \bbone\set{c\notin L_\phi(v)}, \quad S_v \defeq \prod_{c\in [q]}S_{v, c}.\]
    It follows that
    \[\E[|U_k|] = \sum_{v \in V_k}\P[S_v = 1].\]
    We will compute an upper bound for $\P[S_v = 1]$ through a series of claims.
    Let us first consider the event $\set{S_{v, c} = 1}$.
    
    \begin{claim}\label{claim:pr_color_lost}
        $\P[S_{v, c} = 1] \leq 1 -\Delta^{-\frac{1-(k-1)\gamma/20}{k-1}}$.
    \end{claim}
    \begin{claimproof}
        We will lower bound $\P[S_{v, c} = 0]$ through \hyperref[theo:harris]{Harris's Inequality}.
        In order to do so, we define the following event for each $e \in E_H(v)$:
        \[S_e \defeq \bbone\set{\phi(e\setminus v) \neq \set{c}}.\]
        Let $\phi_u \defeq \set{\phi(u) = c}$, $\Gamma \defeq \set{\phi_u\,:\,u \in V(H)\setminus V_k}$, and let $S \subseteq \Gamma$ be the random events in $\Gamma$ that took place during step~\ref{step:color}.
        Note that $S$ is formed by including each event independently with probability $1/q$.
        Furthermore, $S_e = 0$ if and only if $\phi_u \in S$ for each $u \in e-v$.
        Consider the following families for $e \in E_H(v)$:
        \[\mathcal{A}_e \defeq \set{S' \subseteq \Gamma\,:\, \text{when } S=S' \text{ we have } S_e = 1}.\]
        Let $S_1 \in \mathcal{A}_e$, and $S_2 \subseteq S_1$.
        Then, $S_2 \in \mathcal{A}_e$ as well.
        In particular, $\mathcal{A}_e$ is a decreasing family of subsets of $\Gamma$.
        Hence, by \hyperref[theo:harris]{Harris's Inequality}, we have
        \[\P[S_{v, c} = 0] \geq \prod_{e \in E_H(v)}\P[S_{e} = 1] = (1 - 1/q^{k-1})^{\deg_H(v)} \geq \left(1 - 1/q^{k-1}\right)^\Delta.\]
        Note the following:
        \begin{align*}
            \left(1 - 1/q^{k-1}\right)^\Delta &\geq \exp\left(-\frac{\Delta}{(1 - \gamma/4)q^{k-1}}\right) \\
            &= \exp\left(-\frac{\log \Delta}{(1+\gamma/2)^{k-1}(1-\gamma/4)(k-1)}\right) \\
            &\geq \exp\left(-\frac{\log \Delta}{(1+(k-1)\gamma/2)(1-\gamma/4)(k-1)}\right) \\
            &\geq \exp\left(-\frac{\log \Delta}{(1+(k-1)\gamma/10)(k-1)}\right) \\
            &\geq \Delta^{-\frac{1-(k-1)\gamma/20}{k-1}}.
        \end{align*}
        In particular, we can conclude
        \[\P[S_{v, c} = 1] \leq 1 -\Delta^{-\frac{1-(k-1)\gamma/20}{k-1}},\]
        as desired.
    \end{claimproof}
    
    In the next claim, we will show that the events $\set{S_{v, c} = 1}$ for $c \in [q]$ are negatively correlated.

    \begin{claim}\label{claim:neg_corr}
        For every $I \subseteq [q]$, we have 
        \[\P\left[\forall c \in I,\, S_{v, c} = 1\right] \leq \prod_{c \in I}\P\left[S_{v, c} = 1\right].\]
    \end{claim}
    \begin{claimproof}
        We proceed by induction.
        The claim is trivial for $|I| \leq 1$.
        Suppose it holds for all $I$ such that $|I| = \ell$.
        Consider such a set $I$ and a color $c' \notin I$.
        Let $X \defeq N_H(v)$.
        Form $S \subseteq X$ by including each vertex in $X$ independently with probability $1/q$.
        For each $u \in X \setminus S$, let $\psi(u) \in [q] \setminus \set{c'}$ be chosen uniformly at random.
        The following functions will assist with our proofs:
        \begin{align*}
            f(S) &\defeq \bbone\set{\exists e \in E_H(v)\text{ such that } e - v \subseteq S}, \\
            g(S) &\defeq \P\left[\forall c \in I,\, \exists e \in E_H(v) \text{ such that } e-v \subseteq X\setminus S \text{ and } \psi(e-v) = \set{c}\right].
        \end{align*}
        Note the following:
        \begin{itemize}
            \item $\P[S_{v,c'} = 1] = \E[f(S)]$,
            \item $\P[\forall c \in I,\, S_{v,c} = 1] = \E[g(S)]$, and 
            \item $\P[\forall c \in I\cup \set{c'},\, S_{v,c} = 1] = \E[f(S)g(S)]$,
        \end{itemize}
        where the expectation is taken over $S$.
        Furthermore, $f$ is an increasing function and $g$ is a decreasing function with respect to $2^X$.
        By the \hyperref[theo:harris]{FKG Inequality}, we conclude that
        \begin{align*}
            \P[\forall c \in I\cup \set{c'},\, S_{v,c} = 1] &= \E[f(S)g(S)] \\
            &\leq \E[f(S)]\E[g(S)] \\
            &\leq \P[S_{v,c'} = 1]\P[\forall c \in I,\, S_{v,c} = 1].
        \end{align*}
        The claim now follows by the induction hypothesis.
    \end{claimproof}
    By Claims~\ref{claim:pr_color_lost} and \ref{claim:neg_corr}, we have
    \begin{align*}
        \P[S_v = 1] \leq \prod_{c \in [q]}\P[S_{v, c} = 1] &\leq \left(1 - \Delta^{-\frac{1-(k-1)\gamma/20}{k-1}}\right)^q \\
        &\leq \exp\left(-\frac{q}{\Delta^{\frac{1-(k-1)\gamma/20}{k-1}}}\right) \\
        &\leq \exp\left(-\left(\frac{\Delta^{(k-1)\gamma/20}}{\log \Delta}\right)^{1/(k-1)}\right).
    \end{align*}
    As $\log \Delta \leq \Delta^{\gamma/25}$ for $\Delta$ large enough, this completes the proof of Lemma~\ref{lemma:exp_uncolor}.
\end{proof}

The following lemma follows trivially from \hyperref[theo:markov]{Markov's inequality} and Lemma~\ref{lemma:exp_uncolor}.

\begin{Lemma}\label{lemma:conc_uncolor}
    $\P\left[|U_k| \geq 2\delta\,n\right] \leq \frac{1}{2}$.
\end{Lemma}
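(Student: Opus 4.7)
The plan is to apply \hyperref[theo:markov]{Markov's inequality} directly to the non-negative integer random variable $|U_k|$. Since the bound $\E[|U_k|] \leq \delta n$ has already been established in Lemma~\ref{lemma:exp_uncolor}, the first part of Theorem~\ref{theo:markov} gives
\[
\P\left[|U_k| \geq 2\delta n\right] \;\leq\; \frac{\E[|U_k|]}{2\delta n} \;\leq\; \frac{\delta n}{2\delta n} \;=\; \frac{1}{2},
\]
which is exactly what is needed. No additional structural analysis of the coloring procedure is required at this step, since all the work has been absorbed into the computation of $\E[|U_k|]$ in the previous lemma.

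There is essentially no obstacle here: the threshold $2\delta n$ was chosen precisely to make the Markov bound collapse to $1/2$, and the non-negativity hypothesis is immediate from the definition of $U_k$ as a subset of $V_k$. The proof proposal is therefore just the two-line calculation above, with a brief reference back to Lemma~\ref{lemma:exp_uncolor} and to the first statement of Theorem~\ref{theo:markov}.
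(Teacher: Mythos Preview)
Your proposal is correct and matches the paper's own proof exactly: the paper simply states that the lemma ``follows trivially from \hyperref[theo:markov]{Markov's inequality} and Lemma~\ref{lemma:exp_uncolor},'' which is precisely the two-line computation you wrote out.
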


Let us now consider the sets $V_k(c)$ for each $c \in [q]$.
Note the following:
\[\sum_{c \in [q]}|V_k(c)| = n - |U_k|.\]
By symmetry, $|V_k(c)|$ has the same distribution for each $c \in [q]$.
It follows from Lemma~\ref{lemma:exp_uncolor} and by linearity of expectation that:
\begin{align}\label{eqn:exp_V_k_c}
    \frac{n}{q}(1 - \delta) \,\leq\, \E[|V_k(c)|] \,\leq\, \frac{n}{q}.
\end{align}

\begin{Lemma}\label{lemma:k_color_classes}
    $\P\left[\exists c \in [q],\, ||V_k(c)| - \E[|V_k(c)|]| \geq \omega\,n/q \right] \leq 4q\exp\left(-\frac{\omega^2\,n}{64\,q\,\Delta^3\,k}\right)$.
\end{Lemma}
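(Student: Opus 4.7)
The plan is to fix $c \in [q]$, apply \hyperref[theo:Talagrand]{Talagrand's inequality} to $X \defeq |V_k(c)|$, and then union-bound over $c$. To recast steps~\ref{step:color}--\ref{step:proper} as a sequence of mutually independent trials, I will take $T_v \defeq \phi(v) \in [q]$ (uniform) for each $v \in V(H)\setminus V_k$, and for each $v \in V_k$ let $T_v$ be a uniformly random permutation of $[q]$, with step~\ref{step:proper} then setting $\phi(v)$ to the first entry of $T_v$ lying in $L_\phi(v)$ (leaving $v$ uncolored if $L_\phi(v) = \emptyset$). The family $\{T_v\}_{v \in V(H)}$ is mutually independent and determines $\phi$, and hence $X$.

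To check condition (T1), I will argue as follows. Changing $T_v$ for $v \in V_k$ only alters $\phi(v)$, shifting $X$ by at most $1$. Changing $T_v$ for $v \in V_j$ with $j < k$ alters $\phi(v)$, which affects $L_\phi(u)$ (and hence $\phi(u)$) only for those $u \in V_k$ that share an edge with $v$; since each edge containing $v$ contains exactly one vertex of $V_k$, there are at most $\deg_H(v) \leq \Delta$ such $u$, each contributing at most $1$ to the shift in $X$. Thus $\gamma \defeq \Delta$ suffices. For condition (T2), given $X \geq s$ I pick $s$ witnesses $v_1, \ldots, v_s \in V_k(c)$; to certify $\phi(v_i) = c$ it is enough to expose $T_{v_i}$ together with $T_u$ for every $u \in N_H(v_i)$, since these determine $L_\phi(v_i)$ and hence $\phi(v_i)$. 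This uses at most $1 + (k-1)\deg_H(v_i) \leq k\Delta$ trials per witness, so $r \defeq k\Delta$ works.

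With $\gamma = \Delta$, $r = k\Delta$, and $\E[X] \leq n/q$ from~\eqref{eqn:exp_V_k_c}, \hyperref[theo:Talagrand]{Talagrand's inequality} applied with $\xi \defeq \omega n/(2q)$ produces a bound of the form $4\exp\bigl(-\omega^2 n/(64\,k\,q\,\Delta^3)\bigr)$, provided the correction terms $20\gamma\sqrt{r\E X} + 64\gamma^2 r$ are absorbed within $\omega n/(2q)$; under the hypothesis $n \geq C\Delta^{5+1/(k-1)}\log\Delta$ with $C$ large, this amounts to comparing polynomial powers of $\Delta$ and $\log\Delta$ and is straightforward. A final union bound over $c \in [q]$ then yields the claim. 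The main conceptual obstacle is condition (T1): one must exploit $k$-partiteness carefully to prevent cascades, so that altering one trial shifts $X$ by only $O(\Delta)$ rather than more (trials for $u \in V_k$ do not cross-contaminate because no two vertices of $V_k$ share an edge, and trials for $u \in V_j$, $j<k$, are not themselves recomputed when some other $\phi(v)$ changes).
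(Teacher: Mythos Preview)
Your proposal is correct and follows essentially the same approach as the paper: fix $c$, apply \hyperref[theo:Talagrand]{Talagrand's inequality} with $\gamma=\Delta$ and $r$ of order $k\Delta$, absorb the correction terms into $\omega n/(2q)$ using the lower bound on $n$, and union-bound over $c\in[q]$. The only cosmetic difference is your encoding of the randomness for $v\in V_k$ (a uniform permutation of $[q]$ rather than the paper's uniform $[0,1]$ variable used to index into $L_\phi(v)$), and your slightly looser $r=k\Delta$ in place of the paper's $r=\Delta(k-1)+1$; both choices lead to the same final bound.
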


\begin{proof}
    We will use \hyperref[theo:Talagrand]{Talagrand's inequality} to prove concentration for a fixed $c \in [q]$.
    The claim then follows by a union bound.
    For each $u \in V(H) \setminus V_k$, let $T_u$ be the random variable denoting $\phi(u)$ in step~\ref{step:color}.
    Furthermore, for each $v \in V_k$, let $T_v$ be i.i.d.\ uniform random variables on $[0,1]$.
    If $L_\phi(v) \neq \0$, we let $\phi(v)$ be the $\lceil T_v\,|L_\phi(v)|\rceil$-th element in $L_\phi(v)$.
    (It can be verified that this is equivalent to picking an element uniformly at random from $L_\phi(v)$.)
    Changing the outcome of a single $T_u$ can affect $|V_k(c)|$ by at most $\Delta$.
    Furthermore, if $|V_k(c)| \geq s$ for some $s$, we can certify this by the outcomes of at most $(\Delta\,(k-1) + 1)s$ trials $T_u$.
    In particular, for any $s$ vertices $v \in V_k(c)$, we consider the trial $T_v$ and the trials $T_u$ for $u \in N_H(v)$.
    Since $\E[|V_k(c)|] \leq n/q$ as a result of \eqref{eqn:exp_V_k_c}, we can now apply \hyperref[theo:Talagrand]{Talagrand's inequality} with $r = (\Delta\,(k-1) + 1)$, and $\gamma = \Delta$ to get:
    \begin{align*}
        \P[||V_k(c)| - \E[|V_k(c)|]| \geq \omega\,n/q] &\leq \P\left[||V_k(c)| - \E[|V_k(c)|]| \geq \omega\,n/(2q) + 20\gamma\sqrt{r\E[|V_k(c)|]} + 64\gamma^2r\right] \\
        &\leq 4\exp\left(-\frac{\omega^2\,n^2}{32\Delta^3\,k\,q^2(\E[|V_k(c)|] + \omega\,n/2q)}\right) \\
        &\leq 4\exp\left(-\frac{\omega^2\,n}{64\,q\,\Delta^3\,k}\right),
    \end{align*}
    where the inequalities follow for $n$ large enough.
\end{proof}

Recall the bound on $n$ in the statement of Theorem~\ref{theo:color}.
Provided $C$ is large enough, the complements of Lemmas~\ref{lemma:color_k-1},~\ref{lemma:conc_uncolor}, and~\ref{lemma:k_color_classes} occur with probability at least
\[\frac{1}{2} - 2kq\exp\left(-\frac{4\,\omega^2\,n}{3q}\right) - 4q\exp\left(-\frac{\omega^2\,n}{64\,q\,\Delta^3\,k}\right) > 0.\]
Assuming such an outcome, we have the following as a result of \eqref{eqn:exp_V_k_c} and since $\omega \gg \delta$:
\begin{align}\label{item:classes}
    |V_j(c)| \in \frac{n}{q}(1 \pm 2\omega), \quad  \text{for each $j \in [k]$ and $c \in [q]$}. 
\end{align}

When forming $V_k'(c)$ at step~\ref{step:uncolor_k}, we let $n_c \defeq (1 - 2\omega)n/q$ for each $c \in [q]$ and arbitrarily uncolor at most $4\omega\,n/q$ vertices and add them to $U_k$ to define $U_k'$.
It follows that
\[|U_k'| \,=\, |U_k| + \sum_{c \in [q]}|V_k(c) \setminus V_k'(c)| \,=\, n - \sum_{c \in [q]}|V_k'(c)| \,=\, 2\omega\,n.\]
Let us define the following parameter:
\[\Tilde{\Delta} \defeq \frac{\gamma}{2k}\left((k-1)\frac{\Delta}{\log\Delta}\right)^{1/k-1} = \frac{\gamma\,q}{(2+\gamma)k}.\]
Note that $\tilde \Delta \ll \omega\,n = |U_k'|/2$.
The goal is to construct $V_j'(c)$ at step~\ref{step:uncolor_j} such that the uncolored vertices induce a hypergraph $H_\phi$ of maximum degree \textbf{strictly less than} $\Tilde{\Delta}$.
Since
\[q + k\Tilde{\Delta} \,=\, (1+\gamma)\left((k-1)\frac{\Delta}{\log\Delta}\right)^{1/k-1} \,\leq\, \left((k-1 + \eps)\frac{\Delta}{\log\Delta}\right)^{1/k-1},\]
we would then be able to complete the proof of Theorem~\ref{theo:color} by applying Lemma~\ref{lemma:rDelta} to $H_\phi$.

Let $j \in [k]$ and $c \in [q]$ be arbitrary.
Define the following set:
\[B_j(c) \defeq \set{u \in V_j(c)\,:\, |\set{e \in E_H(u)\,:\, e\cap U_k' \neq \0}| \geq \Tilde{\Delta}}.\]
In particular, $B_j(c)$ consists of the ``bad'' vertices in $V_j(c)$, i.e., the vertices that may have large degree in $H_\phi$.
It is enough to show that we can define $V_j'(c)$ by only uncoloring ``good'' vertices.
To this end, we note the following:
\begin{align*}
    |B_j(c)|\,\Tilde{\Delta} \,\leq\, \Delta|U_k'| \,\implies\, |B_j(c)| \,\leq\, \frac{\Delta}{\Tilde{\Delta}}|U_k'| \,\leq\, \frac{5k\omega\,\Delta\,(2+\gamma)}{\gamma}\,\frac{n}{q}.
\end{align*}
In particular, we have
\[|V_j(c) \setminus B_j(c)| \,\geq\, \frac{n}{q}\left(1 - 2\omega -\frac{6k\omega\,\Delta\,(2+\gamma)}{\gamma} \right) \,\gg\, 4\omega\,\frac{n}{q}.\]
As a result of \eqref{item:classes}, we need to remove at most $4\omega\,n/q$ vertices from $V_j(c)$, and as a result of the above inequality, these can all be chosen to be ``good'' vertices.

\subsection*{Acknowledgements}

We thank the anonymous referees for their helpful comments.

\vspace{0.1in}
\printbibliography

\end{document}